\newcommand\dela[1]{}
\def\XXint#1#2#3{{\setbox0=\hbox{$#1{#2#3}{\int}$ }
\vcenter{\hbox{$#2#3$ }}\kern-.6\wd0}}
\newtheorem{theorem}{Theorem}
\numberwithin{equation}{section}
\numberwithin{theorem}{section}
\newtheorem{lemma}[theorem]{Lemma}
\newtheorem{remark}[theorem]{Remark}
\numberwithin{equation}{section}
\newcommand{\wt}{\widetilde}
\newcommand{\e}{\varepsilon}
\def\P{\mathbb{ P}}
\def\E{\mathbb{E}}
\def\P1{\mathbb{P}^{1}}
\def\o{\overline}
\begin{document}

%\showkeys

\title{Stochastic homogenization \\
of multicontinuum heterogeneous flows}

\author[H. Bessaih]{Hakima Bessaih}
\address{University of Wyoming, Department of Mathematics and Statistics, Dept. 3036, 1000
East University Avenue, Laramie WY 82071, United States}
\email{ bessaih@uwyo.edu}

%\author[Y. Efendiev]{ Yalchin Efendiev}
%\address{Department of Mathematics \& ISC, 
%Texas A\&M University, 
%Numerical Porous Media SRI Center, CEMSE Division, 
%King Abdullah University of Science and Technology, 
%Thuwal 23955-6900, 
%Kingdom of Saudi Arabia}
%\email{efendiev@math.tamu.edu}

\author[R. F. Maris]{Razvan Florian Maris}
\address{Alexandru Ioan Cuza University of Iasi, Faculty of Economics and Business Administration, Bd. Carol I, 22, 
Iasi 700505, 
Romania}
\email{florian.maris@feaa.uaic.ro}
%\today
\maketitle

{\footnotesize
\begin{center}

\end{center}
}
\begin{abstract}
We consider a multicontinuum model in porous media applications,
which is described as a system of coupled flow equations.
The coupling between different continua depends on many factors
and its modeling is important for porous media applications. 
The coefficients depend 
on particle deposition that is described in term of a stochastic process solution of an SDE. The stochastic process is considered to be faster than the flow motion and we introduce time-space scales to model the problem.  Our goal is to pass to the limit in time and space and to find an associated averaged system.  This is an averaging-homogenization problem, where the averages are computed in terms of the invariant measure associated to the fast motion and the spatial variable. We use the techniques developed in our previous paper \cite{BEM_19}
to model the interactions between the continua and derive the averaged model
problem that can be used in many applications.
\end{abstract}
{\bf Keywords:} Multicontinuum, Averaging, homogenization, Invariant measures, porous media. \\
%\\
%\\
%{\bf Mathematics Subject Classification 2000}: Primary 60H30, 76S05; Secondary  76M35. 

\maketitle

\section{Introduction and formulation of the problem}
\label{sec1}

\subsection{Motivation. }
\label{subs11}

Multicontinua models arise in many applications \cite{barenblatt1960basic}, 
which
include porous media, material sciences, and so on. The main idea
of multicontinua model is to prescribe multiple effective properties
in each macroscale point. These approaches are important in many real-world
applications, where classical homogenization fails. A typical example
is flow in fractured media. One typically cannot upscale fracture and
surrounding media, called matrix. For this reason, different effective
parameters are prescribed in each coarse-grid block. In applications
related to shale gas, one uses multiple continua to describe 
organic and inorganic matter besides fractures \cite{akkutlu2012multiscale,dpgmsfem2017,pororelgmsfem2018,akkutlu2015multiscale}.

Multicontinua approaches have rigorously been justified recently using
constraint energy minimizing Generalized Multiscale Finite Element Method
\cite{CEL} and nonlocal multicontinua approaches \cite{VCELW}. It turns
out that these models can be used for general upscaling and multicontinua
and
can be employed to separate different regions using spectral decompositions.
The spectral decomposition allows identifying each continua and
local basis functions allow coupling these continua with each others.
These approaches can also be applied to nonlinear problems
\cite{nonlinear_NLMC}.

In multicontinua approaches, the interaction between different continua
is challenging to model. This modeling requires complex local solves and
can depend on the solution itself \cite{wheeler}. This dependence can be a 
result of nonlinear modeling or due to particle deposition due to pore-scale
modeling, which occur in porous media applications.
The latter can modify the interaction between the continua, 
which will depend on the solution. The modeling
the interaction coefficients in the presence of particle deposition
introduces additional challenges and their modeling requires
some type of stochastic modeling.
Our goal is to study one such model, 
which is sufficiently general and can be generalized for more complex cases.
Currently, we can only rigorously analyze the proposed model, which
we plan to generalize later.

%In the paper, we derive macroscopic model assuming that the particle dynamics at microscale
%occurs in a much faster time scale compared to the flow. This is typical 
%in these applications due to the particle dynamics and their interaction. %
%We derive
%a macroscopic model where the new upscaled permeability is computed
%using spatial microscale variations of the permeability and fast dynamics. 
%Besides computing the permeability value, we show that the permeability is deterministic. 
%This is a useful findings as it allows to compute the upscaled permeability
%n a deterministic manner and avoid stochastic macroscale PDEs. 

%Even though our application is specific for Brinkman' equations, 
%our mathematical concepts can be used for many important
%applications where the media properties at the microscale are affected
%by SDE. An example can be a diffusion equation with heterogeneous coefficients
%that depend on a field described by SDE. 
%In general, we have 
%\begin{equation}
%\begin{split}
%\dfrac{\partial u^\e}{\partial t} (t, x)= L(a({x\over\e}, v^\e), u^\e)\\
%d v^\e(t, x) = -\dfrac{1}{\e} (v^\e(t, x)-u^\e(t, x)) dt+ \sqrt {\dfrac{Q}{\e}} dW(t, x), 
%\end{split}
%\end{equation}
%where $W(t)$ is a standard Brownian motion. For example, 
%$L(a({x\over\e}, v^\e), u^\e)=\nabla\cdot(a({x\over\e}, v^\e)\nabla u^\e)$. 
%The question of interest is to derive macroscale equations which will be 
%investigated in our future works. 

\subsection{Mathematical model}
Following the previous motivation,  we consider a particular system given by:
\label{subs12}
\begin{equation}
\label{system1}
\left\{
\begin{array}{rll}
\dfrac{\partial u_1^\e}{\partial t} (t, x)&= \operatorname{div} \left(A_1\left(\dfrac{x}{\e}\right) \nabla u^\e(t, x) \right) + \alpha \left(\dfrac{x}{\e}, v_1^\e(t, x), v_2^\e(t, x)\right) \left(u_2^\e(t, x)-u_1^\e(t, x) \right)+ f_1(t, x), \\
\\
\dfrac{\partial u_2^\e}{\partial t} (t, x)&= \operatorname{div} \left(A_2\left(\dfrac{x}{\e}\right) \nabla u^\e(t, x) \right) + \alpha \left(\dfrac{x}{\e}, v_1^\e(t, x), v_2^\e(t, x)\right)  \left(u_1^\e(t, x)-u_2^\e(t, x) \right) + f_2(t, x), \\
d v_1^\e(t, x) &= -\dfrac{1}{\e} (v_1^\e(t, x)-\beta_{11} u_1^\e(t, x) -\beta_{12} u_2^\e(t, x))dt + \sqrt {\dfrac{Q_1}{\e}} dW_1(t, x), \\
d v_2^\e(t, x) &= -\dfrac{1}{\e} (v_2^\e(t, x)-\beta_{21} u_1^\e(t, x) -\beta_{22} u_2^\e(t, x))dt + \sqrt {\dfrac{Q_2}{\e}} dW_2(t, x)\\ 
u_1^\e(t, x)|_{\partial D} &=0, \\
u_2^\e(t, x) |_{\partial D}&=0, \\
u_1^\e(0, x) & = u_{01}^\e(x), \\
u_2^\e(0, x) & = u_{02}^\e(x), \\
v_1^\e(0, x) & = v_{01}^\e(x), \\
v_2^\e(0, x) & = v_{02}^\e(x), 
\end{array}
\right. 
\end{equation}
where $x\in D$ a bounded domain of $\mathbb{R}^3$ with a smooth boundary $\partial{D}$,  and $t\in [0,T]$. $u_1^\e,\ u_2^\e$ and $v_1^\e,\ v_2^\e$ are respectively the components of the velocity of the fluid and the velocity of the particles.   $W_1(t),\ W_2(t)$ are two standard $L^2(D)$-valued independent Brownian motions defined on a complete probability basis $(\Omega, \mathcal{F}, \mathcal{F}_t, {\mathbb P})$ with expectation $\E$, and  $Q_1,\ Q_2$ are bounded linear operators on $L^2(D)$ of trace class. $u^\e_{01},\ u^\e_{02}$ and  $v^\e_{01},\ v^\e_{02}$ are the initial conditions, $\beta_{ij}, \ 1\leq i,j \leq 2$ are constants, and $f_1,\ f_2$ are the external forces. 
 
Our main goal in this paper is to study the asymptotic behavior of the solution of the system \eqref{system1} when $\e\rightarrow 0$. Notice that $u_1^\e, \ u_2^\e$, the slow components, are random through the function $\alpha$ that depends on the stochastic processes $v_1^\e,\ v_2^\e$, the fast ones, solutions of the stochastic differential equations. Moreover, the function $\alpha$ as well as the matrices $A_1=\left(a_{1ij}\right)_{1\leq i, j \leq 3}, \ A_2=\left(a_{2ij}\right)_{1\leq i, j \leq 3}$ are multiscale. We will prove that $u_1^\e,\ u_2^\e$ converge to averaged velocities $\o{u}_1,\ \o{u}_2$ solutions of the averaged system \eqref{eqou1} defined in subsection \ref{sec6}, where the averaged operators $\o{A}_1, \o{A}_2$ and $\o\alpha$ are given by \eqref{homopi} and \eqref{defoalpha}. The averages are taken with respect to the periodic variable $y$ and the invariant measure associated to the process $(v_1^\e,\ v_2^\e)$ for frozen $(u_1^\e, \ u_2^\e)$. 

The techniques used to pass to the limit in the mathematical model \eqref{system1} are a generalization of the techniques used in our recent paper \cite{BEM_19}, where a simpler reaction diffusion model was considered. Let us mention that, like in \cite{BEM_19} and 
\cite{BEM_15} the random coefficient $\alpha$ does depend on the spatial variable $x/\e$ and the process $(v_1^\e, v_2^\e )$ which is ergodic for frozen $(u_1^\e, u_2^\e )$. Hence, the passage to the limit involves a combination of  two kind of convergences: averaging (in time) and homogenization (in space).  There is an extensive literature on averaging principles for stochastic systems \cite{FW1, SV, Ver, CF2008, Cerrai2009} and the references therein. For the basic results on homogenization of periodic and random equations, we refer to \cite{Allaire, BLP}.  Let us refer to an interesting paper \cite{PP} where the authors  studied a particular model of random homogenization, where the coefficients depend upon $x/\e$ and a stationary diffusion process. They effectively used averaging and homogenization techniques like in our current paper. However, the big difference is that our stochastic process $(v_1^\e, v_2^\e )$ is solution of an SDE coupled with the slow motion equation. And a consequence, the ergodic properties of the stochastic process have to be understood when the solution $(u_1^\e, u_2^\e )$ is frozen. This added difficulty makes our convergence process quite different from the convergence process in \cite{PP}.  

%For $\e>0$ fixed, the well posedness of system \eqref{system1} does not follow from classical results and has to be studied accordingly. %In this paper, we assume that $\alpha$ is Lipschitz. In particular, uniqueness of solutions is proved by using successive estimates in %order to get to apply the Gronwall's Lemma, see Section 3. 1. for more details. 

The existence of weak solution for \eqref{system1} is proved in Theorem \ref{thexun}  by using a Galerkin approximation $(u_{1n}^\e,\ u_{2n}^\e,\ v_{1n}^\e, \ v_{2n}^\e)$
that is a solution of a well posed system. We then pass to the limit on $n$ after showing some uniform estimates in $n$. These estimates are also uniform in $\e$. 
By using our assumption on $\alpha$ and the special form of our system, we are able to prove the uniqueness of the weak solution $(u_1^\e, u_2^\e, v_1^\e, v_2^\e)$. We prove that our weak solution is also strong, and get better uniform estimates in $\e$ for the solution $u^\e$ in the Sobolev space $W^{1, 2}(0, T; L^2(D))$. 

We study then the asymptotic behavior of the fast motion variable $(v_1^\e, \ v_2^\e)$ for frozen slow motion variables $(u_1^\e,\ u_2^\e)$. Indeed, we consider the SDEs \eqref{v} for given $\xi=(\xi_1,\xi_2)$. It has a mild solution which is also a strong solution. Its transition semigroup $P_t^{\xi}=P_t^{(\xi_1,\xi_2)}$ is well defined and has a unique invariant measure 
$\mu^\xi=\mu^{(\xi_1,\xi_2)}$ which is ergodic and strongly mixing. 
We define the operators $ \alpha^\e$  and $\o{\alpha^\e}$ in section 3 and  $\o{\alpha^\e}$  
refers to the average of $\alpha^\e$ wrt to the invariant measure 
$\mu^{\xi}$. The main difficulty in showing the convergence stands in passing to the limit for $\phi \in H_0^1(D)$ on the term
\begin{equation}\label{term}
\begin{split}
\int_D \left( \alpha^\e(v_1^\e, v_2^\e) (u_1^\e-u_2^\e) - \o{\alpha}(\beta_{11}\o{u}_1 + \beta_{12}\o{u}_2, \beta_{21}\o{u}_1 + \beta_{22}\o{u}_2)( \o{u}_1-\o{u}_2) \right)\phi dx.
\end{split}
\end{equation}

This is done by using a Khasminskii type argument, following an idea already introduced in \cite{Cerrai2009}. The key lemma \ref{key}, introduced previously in \cite{BEM_19} is crucial for the passage to the limit in \eqref{term}. This lemma is a refined version of previous ergodic results used in \cite{Cerrai2009}. 
By using the uniform estimates obtained in Section 3, a tightness argument 
and some known results for periodic functions, see \cite{Allaire} (lemma 1. 3) the passage to the limit is performed in distribution. We obtain a convergence in probability by using the fact that the limit $\o{u}$ is deterministic. 

The paper is organized as follows, Section 2 is dedicated to the introduction of the functional setting and assumptions. The main results are given in  Section 3. In particular the existence and uniqueness of weak solutions of  system \eqref{system1}, the regularity of solutions and all the uniform estimates wrt $\e$, the cell problems are introduced, the asymptotic behavior of the fast motion is analized and the main result to the averaged system is stated.  All the proofs for the results of Section 3 are postponed to Section 4, including the proof of the main result and the passage to the limit to the averaged equation.

\section{Preliminaries and Assumptions}
\label{sec2}
We make the following notations for spaces that will be used throughout the paper. 
For any two Hilbert spaces $X$ and $Y$, with norms denoted by $\|\cdot\|_X$ and $\|\cdot\|_Y$, $C(X, Y)$ denotes the space of continuous functions, and $C_b(X, Y)$ the Banach space of bounded and continuous functions
$\phi : X\to Y$ endowed with the supremum norm:
$$\|\phi\|_{C_b(X, Y)} = \sup_{x\in X} \|\phi (x)\|_{Y}. $$

For any $\phi\in C^u(X, Y)$, the subspace of uniformly continuous functions defined on $X$ with values in $Y$, we denote by $[\phi]_{C^u(X, Y)} : (0, \infty) \to \mathbb{R}$, the modulus of uniform continuity of $\phi$:
$$[\phi]_{C^u(X, Y)} (r) = \sup_{0 < \|x- y\|_X \leq r} \|\phi (x) - \phi(y)\|_Y, $$
with
$$\lim_{r\to 0} [\phi]_{C_b^u(X, Y)} (r) =0. $$
$Lip(X, Y)$ denotes the space of Lipschitz functions defined on $X$ with values in $Y$, for $\phi \in Lip(X, Y)$ we denote by $[\phi]_{Lip(X, Y)}$ the Lipschitz constant of $\phi$:
$$[\phi]_{Lip(X, Y)} = \sup_{x\neq y} \frac {\|\phi (x) - \phi(y)\|_Y}{\| x-y\|_X}. $$
We notice that for any $\phi \in Lip(X, Y)$ we have:
\begin{equation}\label{lip}
\begin{split}
\|\phi (x)\|_Y \leq \|\phi (x) - \phi(0)\|_Y + \|\phi(0)\|_ Y&\leq [\phi]_{Lip(X, Y)} \|x\|_X + \|\phi(0)\|_Y \\
&\leq ([\phi]_{Lip(X, Y)}+\|\phi(0)\|_Y) (1+\|x\|_X), 
\end{split}
\end{equation}
so the space will be naturally equipped with the norm
\begin{equation}\label{lipnorm}
\|\phi\|_{Lip(X, Y)} = \|\phi(0)\|_Y + [\phi]_{Lip(X, Y)}. 
\end{equation}
To simplify the notations, when there is no confusion we omit the use of subscripts from the notations, and we simply write $\|x\|$, $\|\phi\|$, $[\phi](r)$, $[\phi]$. Also if $Y=\mathbb{R}$ we omit it from the notations, and the spaces are denoted by $C(X)$, $C_b(X)$, $C^u(X)$, and $Lip(X)$. 

For $Y=[0, 1]^3$ the space $C_\#(Y)$ denotes the space of continuous functions on $Y$ that are $Y$-periodic and the space $L^2_\#(Y)$ denotes the closure of $C_\#(Y)$ in $L^2(Y)$. 

We now give the assumptions for the system \eqref{system1} . 

The function $\alpha:Y\times\mathbb{R}^2\to\mathbb{R}$ satisfies the following conditions:

i) For any $\eta_1,\eta_2\in\mathbb{R}$ the function $\alpha(\cdot, \eta_1, \eta_2)$ is measurable and periodic in $y\in Y$. 

ii) For almost every $y\in Y$, the function $\alpha(y, \cdot, \cdot)$ is bounded and Lipschitz, uniformly with respect to $y$. 

We notice that the function $\o{\alpha}:\mathbb{R}^2\to\mathbb{R}$, $\displaystyle\o{\alpha}(\eta_1,\eta_2) = \int_Y \alpha (y, \eta_1,\eta_2) dy$ is Lipschitz and bounded. 

The matrices $A_1=\left(a_{1ij}\right)_{1\leq i, j \leq 3}, \ A_2=\left(a_{2ij}\right)_{1\leq i, j \leq 3} \in L^\infty(Y; \mathbb{R}^{3\times 3})$ are strictly positive and bounded uniformly in $y\in Y$, i. e. there exist $0< m < M$ such that
\begin{equation}\label{A}
m \xi^2 \leq A_1(y)\xi \xi \leq M\xi^2, \  m \xi^2 \leq A_2(y)\xi \xi \leq M\xi^2, 
\end{equation}
for almost every $y\in Y$ and $\xi \in \mathbb{R}^3$. 

We also assume that the external forces $f_1, \ f_2 \in L^2(0, T;L^2(D))$ and the initial conditions $u^\e_{01}. u^\e_{02}, v^\e_{01}, v^\e_{02} \in L^2(D)$. 

%\section{Study of the system \eqref{system1}}

\section{Main results}
\label{sec3}

In this section we state the main results of the paper while their proofs will be postponed to the the following sections.

\subsection{Well-posedness of the system \eqref{system1}}

This subsection will be devoted to stating the existence and uniqueness of the solution of the system \eqref{system1} as well as some uniform estimates. 

 For any $\e>0$ we denote by $A_1^\e, A_2^\e$ the matrices 
\begin{equation}\label{Ae}
A_1^\e, A_2^\e: \mathbb{R}^3 \to \mathbb{R}^{3\times 3}, \ \ A_1^\e(x) = A_1\left( \dfrac{x}{\e} \right), \ A_2^\e(x) = A_2\left( \dfrac{x}{\e}\right)
\end{equation}
and by $\alpha^\e$ the operator, 
\begin{equation}\label{alpha}
\alpha^\e: L^2(D)\times L^2(D) \to L^\infty(D), \ \ \alpha^\e(\eta_1, \eta_2) (x) = \alpha\left(\dfrac{x}{\e}, \eta_1(x), \eta_2(x)\right). 
\end{equation}
The operator $\alpha^\e$ is well defined. Indeed, given that $\alpha$ is bounded, we only need to show the measurability in $x$ of $\alpha^\e(\eta_1,\eta_2)$ for any $\eta_1, \eta_2  \in L^2(D)$. For such a function, we consider two sequences $\eta_{1n}, \eta_{2n} \in C_0(D)$ convergent to $\eta_1, \eta_2$ pointwise in $D$. The function $(y, x) \to \alpha (y, \eta_{1n}(x), \eta_{2n}(x)$ is a Carath\'{e}odory function, measurable in $y$ and continuous in $x$, so $x\to \alpha\left(\dfrac{x}{\e}, \eta_{1n}(x), \eta_{2n}(x) \right)$ is measurable, and by the Lipschitz condition of $\alpha$ is pointwise convergent to $\alpha^\e(\eta_1,\eta_2)$, which shows that $\alpha^\e(\eta_1,\eta_2)$ is measurable. We have the following existence and uniqueness result:

\begin{theorem}\label{thexun}
Assume that $u^\e_{01},u^\e_{02},v^\e_{01},v^\e_{02} \in L^2(D)$ for every $\e>0$, then for each $T>0$,  there exists a unique $\mathcal{F}_t$ - measurable solution of the system \eqref{system1}, $u_1^\e, u_2^\e \in L^\infty(\Omega;C([0, T];L^2(D))\cap L^2(0, T;H_0^1(D)))$ and $v_1^\e, v_2^\e \in L^2(\Omega;C([0, T];L^2(D)))$ in the following sense: $\mathbb{P}$ a. s. 
\begin{equation}
\label{weaksole1}
\begin{split}
&\int_D u_1^\e(t) \phi dx - \int_D u^\e_{01} \phi dx + \int_0^t \int_D A_1^\e\nabla u_1^\e (s) \nabla \phi dx ds =\\
&\int_0^t \int_D \alpha^\e(v_1^\e(s), v_2^\e(s)) (u_2^\e(s)-u_1^\e(s)) \phi dx ds + \int_0^t \int_D f_1(s) \phi dx ds, 
\end{split}
\end{equation}
\begin{equation}
\label{weaksole2}
\begin{split}
&\int_D u_2^\e(t) \phi dx - \int_D u^\e_{02} \phi dx + \int_0^t \int_D A_2^\e\nabla u_2^\e (s) \nabla \phi dx ds =\\
&\int_0^t \int_D \alpha^\e(v_1^\e(s), v_2^\e(s)) (u_1^\e(s)-u_2^\e(s)) \phi dx ds + \int_0^t \int_D f_2(s) \phi dx ds, 
\end{split}
\end{equation}
for every $t\in[0, T]$ and every $\phi \in H_0^1(D)$, and
\begin{equation}
\label{mildsole}
v_1^\e(t) = v^\e_{01} e^{-t/\e} +\frac{1}{\e} \int_0^t (\beta_{11}u_1^\e(s)+ \beta_{12}u_2^\e(s))e^{-(t-s)/\e} ds +\frac{\sqrt{Q_1}}{\sqrt{\e}} \int_0^t e^{-(t-s)/\e} dW_1(s),
\end{equation}
\begin{equation}
\label{mildsole}
v_2^\e(t) = v^\e_{02} e^{-t/\e} +\frac{1}{\e} \int_0^t (\beta_{21}u_1^\e(s)+ \beta_{22}u_2^\e(s))e^{-(t-s)/\e} ds +\frac{\sqrt{Q_2}}{\sqrt{\e}} \int_0^t e^{-(t-s)/\e} dW_2(s). 
\end{equation}

Moreover, if the initial conditions $u^\e_{01}, u^\e_{02}$ are uniformly bounded in $L^2(D)$, then the solutions $u_1^\e, u_2^\e$ satisfy the estimates:
\begin{equation}
\label{est1}
\sup_{\e > 0} \| u_i^\e \|_{L^\infty (\Omega;L^2(0, T;H_0^1(D)))} \leq C_T, 
\end{equation}
\begin{equation}
\label{est2}
\sup_{\e > 0} \| u_i^\e \|_{L^\infty (\Omega;C([0, T];L^2(D)))} \leq C_T, 
\end{equation}
and
\begin{equation}
\label{est3}
\sup_{\e > 0} \left\| \dfrac{\partial u_i^\e}{\partial t} \right\|_{L^\infty (\Omega;L^2(0, T;(H^{-1}(D))))} \leq C_T,
\end{equation}
for $i\in \{1,2\}$. 
Also, if the initial conditions $v^\e_{01}, v^\e_{02}$ are uniformly bounded in $L^2(D)$ we also have the estimates for $v_1^\e, v_2^\e$:
\begin{equation}
\label{est4}
\sup_{\e > 0} \E \sup_{t\in[0, T]} \| v_i^\e(t)\|^2_{L^2 (D)} \leq C_T, 
\end{equation}
for $i\in \{1,2\}$. 
\end{theorem}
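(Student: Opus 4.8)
The plan is to prove existence via a Galerkin approximation, then establish uniqueness by exploiting the special structure of the coupling, and finally extract the uniform-in-$\e$ estimates from the energy identities. First I would fix $\e > 0$ and set up a Galerkin scheme: let $\{e_k\}$ be the $L^2(D)$-orthonormal eigenbasis of $-\Delta$ with Dirichlet boundary conditions, let $P_n$ be the projection onto $V_n = \mathrm{span}\{e_1,\dots,e_n\}$, and seek $u_{1n}^\e, u_{2n}^\e \in V_n$ and $v_{1n}^\e, v_{2n}^\e \in V_n$ solving the projected system. Since $\alpha^\e$ is bounded and Lipschitz as a map on $L^2(D)\times L^2(D)$ (by assumption ii) and the Carathéodory argument already given), the drift of the finite-dimensional system is locally Lipschitz, and the $v$-equations are linear in $v$ with an Ornstein–Uhlenbeck structure; so the projected system is a well-posed SDE in $\mathbb{R}^{4n}$ with a unique $\mathcal{F}_t$-adapted solution on $[0,T]$, and the mild formulation \eqref{mildsole} holds at the Galerkin level with $P_n$ inserted.

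Next I would derive the basic a priori estimates on the Galerkin solutions, uniform in both $n$ and $\e$. Testing the $u_{1n}^\e$ equation with $u_{1n}^\e$ and the $u_{2n}^\e$ equation with $u_{2n}^\e$ and adding, the key observation is that the two coupling terms combine into
$$\int_D \alpha^\e(v_{1n}^\e, v_{2n}^\e)\left[(u_{2n}^\e - u_{1n}^\e)u_{1n}^\e + (u_{1n}^\e - u_{2n}^\e)u_{2n}^\e\right] dx = -\int_D \alpha^\e(v_{1n}^\e, v_{2n}^\e)(u_{1n}^\e - u_{2n}^\e)^2 dx \le 0,$$
because $\alpha \ge 0$ — wait, the sign of $\alpha$ is not assumed, only boundedness; if $\alpha$ may change sign one instead bounds this term by $\|\alpha\|_\infty(\|u_{1n}^\e\|^2 + \|u_{2n}^\e\|^2)$ and absorbs it with Gronwall. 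Using the coercivity \eqref{A} for the diffusion terms and Young's inequality on the forcing, this yields, after Gronwall,
$$\sup_{t\in[0,T]}\bigl(\|u_{1n}^\e(t)\|_{L^2}^2 + \|u_{2n}^\e(t)\|_{L^2}^2\bigr) + m\int_0^T\bigl(\|\nabla u_{1n}^\e\|_{L^2}^2 + \|\nabla u_{2n}^\e\|_{L^2}^2\bigr)\,ds \le C_T,$$
where $C_T$ depends only on $\|f_i\|_{L^2(0,T;L^2)}$ and $\sup_\e\|u_{0i}^\e\|_{L^2}$ — note these bounds are \emph{pathwise} (true for every $\omega$), since the $u$-equations contain no stochastic forcing. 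This gives \eqref{est1}–\eqref{est2} at the Galerkin level. For \eqref{est3}, I bound $\partial_t u_{in}^\e$ in $H^{-1}$ by duality using the equation and the estimates just obtained (the diffusion term is controlled by the $H^1_0$ bound, the coupling term by the $L^2$ bound, and $f_i \in L^2(0,T;L^2)\subset L^2(0,T;H^{-1})$). For the $v$-estimates \eqref{est4}, I use the mild formula \eqref{mildsole}: the deterministic parts are controlled by the $u$-estimates and the explicit kernel $\tfrac1\e e^{-(t-s)/\e}$ (whose $L^1$-in-$s$ norm is bounded by $1$), while for the stochastic convolution $Z_i^\e(t) = \tfrac{\sqrt{Q_i}}{\sqrt\e}\int_0^t e^{-(t-s)/\e}dW_i(s)$ I apply a factorization/maximal-inequality argument to get $\E\sup_{t\le T}\|Z_i^\e(t)\|_{L^2}^2 \le C\,\mathrm{Tr}(Q_i)$ uniformly in $\e$ (the point being that $\tfrac1\e\int_0^t e^{-2(t-s)/\e}ds \le \tfrac12$).

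Then I pass to the limit $n\to\infty$. From \eqref{est1}–\eqref{est3} and Aubin–Lions, $\{u_{in}^\e\}_n$ is relatively compact in $L^2(0,T;L^2(D))$ and weakly-$*$ convergent (along a subsequence) in $L^2(0,T;H^1_0)$ with $\partial_t$ weakly convergent in $L^2(0,T;H^{-1})$; the strong $L^2$-convergence plus the Lipschitz continuity of $\alpha^\e$ on $L^2(D)\times L^2(D)$ lets me pass to the limit in the nonlinear coupling term, and the mild formula for $v_{in}^\e$ passes to the limit using the $u$-convergence and the convergence of the (fixed-$\e$) stochastic convolutions. This produces a solution in the sense \eqref{weaksole1}–\eqref{weaksole2}, \eqref{mildsole}, and the uniform bounds survive the limit. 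For uniqueness, given two solutions with the same data, I subtract: writing $w_i = u_i^\e - \tilde u_i^\e$ and $z_i = v_i^\e - \tilde v_i^\e$, testing the $w$-equations with $w_i$, the coupling difference splits as $\alpha^\e(v^\e)\cdot(\text{difference of }u\text{'s}) + (\alpha^\e(v^\e)-\alpha^\e(\tilde v^\e))\cdot(\tilde u\text{'s})$; the first group again telescopes to $-\int\alpha^\e(v^\e)(w_1-w_2)^2 \le$ (controlled) as above, and the second is bounded using $\|\alpha^\e(v^\e)-\alpha^\e(\tilde v^\e)\|_{L^\infty(D)}\le [\alpha]_{Lip}(\|z_1\|_{L^\infty}+\|z_2\|_{L^\infty})$ — here I would instead use that the difference $z_i$ of the mild solutions satisfies $\|z_i(t)\|_{L^2}\le \tfrac1\e\int_0^t e^{-(t-s)/\e}\sum_j|\beta_{ij}|\|w_j(s)\|_{L^2}ds$ (the stochastic convolutions cancel since the noise is additive), pairing it with an $L^2$–$L^2$ estimate for the $w$-equation rather than $L^\infty$, and closing with Gronwall in the combined quantity $\|w_1\|^2+\|w_2\|^2+\|z_1\|^2+\|z_2\|^2$.

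The main obstacle I anticipate is the interplay of the three estimate scales in \eqref{est4} and in uniqueness: making the stochastic convolution bound genuinely uniform in $\e$ (the $\sqrt{1/\e}$ prefactor is exactly compensated by the fast decay of the kernel, but this needs the factorization method or a careful Burkholder estimate rather than a naïve Itô-isometry pointwise-in-$t$ bound, to also control the supremum in $t$), and, for uniqueness, handling the $\alpha^\e(v^\e)-\alpha^\e(\tilde v^\e)$ term without an $L^\infty$-bound on $v^\e$ (which is not available from \eqref{est4}). The resolution for the latter is that because the noise is additive, $v_i^\e - \tilde v_i^\e$ is actually a \emph{deterministic-drift} difference driven only by $u_i^\e-\tilde u_i^\e$, so it can be estimated in $L^2(D)$ directly in terms of the $w_j$'s without any regularity beyond $L^2$, and the $\alpha$-Lipschitz term can then be absorbed after one more Gronwall — this is precisely the point where the special structure of the system is used.
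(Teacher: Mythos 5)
Your overall strategy coincides with the paper's: a Galerkin scheme (the paper additionally subtracts the stochastic convolution $w_i^\e$ so that the $v$-equations become pathwise ODEs, but this is cosmetic), pathwise energy estimates uniform in $n$ and $\e$, an Aubin--Lions compactness argument to pass to the limit, a duality bound for $\partial_t u_i^\e$, the elementary observation $\frac{1}{\e}\int_0^t e^{-2(t-s)/\e}ds\le \frac12$ for \eqref{est4}, and a Gr\"onwall argument for uniqueness exploiting that the additive noise cancels in the difference of the $v$-components.

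There is, however, one step in your uniqueness argument that does not close as written. You correctly identify that $z_i = v_i^\e - \tilde v_i^\e$ is controlled in $L^2(D)$ by $\sup_s\|w_j(s)\|_{L^2}$, but the problematic term is the spatial integral
\begin{equation*}
\int_D \bigl(\alpha^\e(v_1^\e,v_2^\e)-\alpha^\e(\tilde v_1^\e,\tilde v_2^\e)\bigr)\,(\tilde u_2^\e-\tilde u_1^\e)\,w_1\,dx \;\le\; [\alpha]\int_D \bigl(|z_1|+|z_2|\bigr)\,|\tilde u_2^\e-\tilde u_1^\e|\,|w_1|\,dx,
\end{equation*}
a product of \emph{three} functions each of which you only control in $L^2(D)$; an $L^2$--$L^2$ pairing cannot absorb the third factor, and $\tilde u_2^\e-\tilde u_1^\e$ is not small, merely bounded in $H_0^1(D)$. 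The paper resolves this by H\"older with exponents $(2,4,4)$, the Sobolev embedding $H_0^1(D)\hookrightarrow L^4(D)$ in dimension three, and Young's inequality: the factor $\|w_1\|_{L^4}^2\le C\|\nabla w_1\|_{L^2}^2$ is absorbed into the coercive term $\frac{m}{2}\|\nabla w_1\|_{L^2}^2$, leaving a Gr\"onwall inequality for $\sup_{s\le t}(\|w_1\|_{L^2}^2+\|w_2\|_{L^2}^2)$ with the time-\emph{integrable} (not bounded) weight $1+\sum_{i,j}\|\nabla u_{ij}^\e(s)\|_{L^2}^2$, which is admissible because of \eqref{est1}. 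Your proposed Gr\"onwall in the combined quantity $\|w_1\|^2+\|w_2\|^2+\|z_1\|^2+\|z_2\|^2$ with a bounded coefficient would fail at exactly this term; you need the $L^4$ interpolation and the integrable-weight version of Gr\"onwall to conclude. The rest of your plan, including the treatment of the stochastic convolution in \eqref{est4}, is sound and matches the paper.
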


\begin{theorem}
\label{threg}
Assume that the initial conditions $u_{01}^\e,\ u_{02}^\e$ are uniformly bounded in $H_0^1(D)$. Then the solutions $u_1^\e, \ u_2^\e \in  L^\infty (\Omega; C([0, T];H^1_0(D)))$ and satisfy the following uniform estimates:
%\begin{equation}
%\label{est1'}
%\sup_{\e > 0} \| u^\e \|_{L^\infty (\Omega;L^2(0, T;H^2(D)))} \leq C_T, 
%\end{equation}
\begin{equation}
\label{est2'}
\sup_{\e > 0} \| u_i^\e \|_{L^\infty (\Omega;C([0, T];H_0^1(D)))} \leq C_T, 
\end{equation}
and
\begin{equation}
\label{est3'}
\sup_{\e > 0} \left\|\dfrac{ \partial u_i^\e}{\partial t} \right\|_{L^\infty (\Omega;L^2(0, T; L^2(D)))} \leq C_T, 
\end{equation}
for $1\leq i,j \leq 2$.
\end{theorem}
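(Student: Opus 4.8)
\emph{Plan.} The right strategy is to return to the Galerkin approximation $(u_{1n}^\e,u_{2n}^\e,v_{1n}^\e,v_{2n}^\e)$ used to prove Theorem \ref{thexun} (say, with the basis of Dirichlet eigenfunctions of $-\Delta$ on $D$, which does not increase the $H_0^1(D)$ norm under projection) and to derive, at the finite–dimensional level, one additional energy estimate, uniform in $n$, $\e$ and $\omega$, by testing the equations for $u_{in}^\e$ with $\partial_t u_{in}^\e$ rather than with $u_{in}^\e$. This choice of test function is the crux: since $A_i^\e(x)=A_i(x/\e)$ oscillates at scale $\e$, differentiating the equations in space loses powers of $\e$, so the only admissible route to an $\e$-uniform bound on $\nabla u_i^\e$ is the time–derivative energy identity. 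Testing the $i$-th Galerkin equation with $\partial_t u_{in}^\e$ (admissible, being a finite combination of basis functions) and using that $A_i^\e$ is time–independent and symmetric (or, if symmetry of $A_i$ is not assumed, replacing $A_i$ by its symmetric part in the quadratic form below) gives, for $i\in\{1,2\}$ and $j\ne i$,
\[
\|\partial_t u_{in}^\e(t)\|_{L^2(D)}^2+\frac12\frac{d}{dt}\int_D A_i^\e\nabla u_{in}^\e\cdot\nabla u_{in}^\e\,dx
=\int_D \alpha^\e(v_{1n}^\e,v_{2n}^\e)(u_{jn}^\e-u_{in}^\e)\,\partial_t u_{in}^\e\,dx+\int_D f_i\,\partial_t u_{in}^\e\,dx .
\]

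\emph{The estimate.} Since $\alpha$ is bounded by a deterministic constant, Young's inequality bounds the right–hand side by $\tfrac12\|\partial_t u_{in}^\e\|_{L^2(D)}^2+C\big(\|u_{1n}^\e\|_{L^2(D)}^2+\|u_{2n}^\e\|_{L^2(D)}^2+\|f_i(t)\|_{L^2(D)}^2\big)$. Absorbing the first term, summing over $i$, using the ellipticity bound $\int_D A_i^\e\nabla w\cdot\nabla w\ge m\|\nabla w\|_{L^2(D)}^2$ from \eqref{A}, integrating on $[0,t]$, and controlling the initial contribution by $\int_D A_i^\e\nabla u_{0in}^\e\cdot\nabla u_{0in}^\e\le M\|u_{0in}^\e\|_{H_0^1(D)}^2\le M\|u_{0i}^\e\|_{H_0^1(D)}^2$ (uniformly bounded by hypothesis), one obtains, $\mathbb{P}$-a.s.,
\[
\sup_{t\in[0,T]}\|\nabla u_{in}^\e(t)\|_{L^2(D)}^2+\int_0^T\|\partial_t u_{in}^\e(t)\|_{L^2(D)}^2\,dt\le C_T ,
\]
where $C_T$ depends only on $T$, $m$, $M$, the bound on $\alpha$, $\sup_\e\|u_{0i}^\e\|_{H_0^1(D)}$, $\|f_i\|_{L^2(0,T;L^2(D))}$ and on the constant in the (Galerkin-level, hence $n$-uniform) versions of \eqref{est1}–\eqref{est2}; in particular it is uniform in $n$, $\e$ and $\omega$. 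Passing to the limit $n\to\infty$ and invoking weak-$*$ lower semicontinuity in $L^\infty(0,T;H_0^1(D))$ and weak lower semicontinuity in $L^2(0,T;L^2(D))$, the solution $u_i^\e$ of Theorem \ref{thexun} inherits the same bounds; this already yields \eqref{est3'} and the $L^\infty(0,T;H_0^1(D))$ part of \eqref{est2'}.

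\emph{Promotion to $C([0,T];H_0^1(D))$.} It remains to upgrade $L^\infty(0,T;H_0^1(D))$ to $C([0,T];H_0^1(D))$. Since $u_i^\e\in L^\infty(0,T;H_0^1(D))$ and $\partial_t u_i^\e\in L^2(0,T;L^2(D))\subset L^2(0,T;H^{-1}(D))$, a standard lemma gives that (after modification on a null set) $t\mapsto u_i^\e(t)$ is weakly continuous into $H_0^1(D)$; moreover, writing the equation as $\partial_t u_i^\e-\operatorname{div}(A_i^\e\nabla u_i^\e)=g_i^\e$ with $g_i^\e:=\alpha^\e(v_1^\e,v_2^\e)(u_j^\e-u_i^\e)+f_i\in L^2(0,T;L^2(D))$ and using that now $\partial_t u_i^\e\in L^2(0,T;L^2(D))$, the energy identity
\[
\int_D A_i^\e\nabla u_i^\e(t)\cdot\nabla u_i^\e(t)\,dx=\int_D A_i^\e\nabla u_{0i}^\e\cdot\nabla u_{0i}^\e\,dx+2\int_0^t\!\!\int_D\big(g_i^\e-\partial_t u_i^\e\big)\,\partial_t u_i^\e\,dx\,ds
\]
shows that $t\mapsto\int_D A_i^\e\nabla u_i^\e(t)\cdot\nabla u_i^\e(t)\,dx$ is continuous. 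As $w\mapsto\big(\int_D A_i^\e\nabla w\cdot\nabla w\,dx\big)^{1/2}$ is, by \eqref{A} and Poincaré, a norm on $H_0^1(D)$ equivalent to the usual one, continuity of this (equivalent) norm together with weak continuity into $H_0^1(D)$ forces strong continuity, i.e.\ $u_i^\e\in C([0,T];H_0^1(D))$, and the bounds above become genuine uniform $C([0,T];H_0^1(D))$ estimates, proving \eqref{est2'}.

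\emph{Main obstacle.} The substantive point is exactly the one just indicated: the coefficients oscillate at scale $\e$, so the gradient bound cannot come from elliptic regularity and must be produced by the $\partial_t u$–energy identity; once that is set up, the rest — tracking that every constant is deterministic so that the pathwise bounds give $L^\infty(\Omega;\cdot)$ estimates (which works because $\alpha$ is bounded by a constant and the previously established bounds \eqref{est1}–\eqref{est3} are already $L^\infty(\Omega;\cdot)$, and because the $u_i^\e$-equations carry no Itô term), the justification of testing with $\partial_t u_{in}^\e$ (immediate at the Galerkin level), and the weak-continuity-plus-norm-continuity argument for time continuity — is routine. The only structural assumption one must either invoke or circumvent (via the symmetric part) is the symmetry of $A_i$, needed for the identity $\frac{d}{dt}\int_D A_i^\e\nabla u\cdot\nabla u=2\int_D A_i^\e\nabla u\cdot\nabla\partial_t u$.
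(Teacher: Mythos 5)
Your proposal is correct and follows essentially the same route as the paper: the paper likewise returns to the Galerkin system, tests with $\partial_t u_{in}^\e$, uses the boundedness of $\alpha$ and the ellipticity bound \eqref{A} to get the $n$-, $\e$- and $\omega$-uniform bounds on $\sup_t\|\nabla u_{in}^\e\|_{L^2}$ and $\int_0^T\|\partial_t u_{in}^\e\|_{L^2}^2$, and passes to the limit. The only difference is cosmetic: for the upgrade to $C([0,T];H_0^1(D))$ the paper simply invokes Lemma 1.4, Chap.\ III of Temam together with $u_i^\e\in C([0,T];L^2(D))$, whereas you spell out the weak-continuity-plus-energy-norm-continuity argument (and you rightly flag the symmetry of $A_i$ needed in the quadratic-form identity, which the paper uses tacitly).
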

\subsection{The cell problems}
\label{sec4}
%Calculation:
%\begin{equation}
%u^\e(x, y, t, s) = u^\e\left(x, \dfrac{x}{\e}, t, \dfrac{t}{\e}\right) = u_0+\e u_1+e^2 u_2
%\end{equation}
%\begin{equation}
%\dfrac{\partial u^\e}{\partial t} = e^{-1} \dfrac{\partial u_0}{\partial s} + \left(\dfrac{\partial u_0} {\partial t} + \dfrac{\partial u_1} {\partial s}\right)
%\end{equation}
%

In this subsection we introduce $\chi_1, \chi_2 : Y \to \mathbb{R}^3$ the solutions of the cell problems that correspond to the system \eqref{system1}:
\begin{equation}
\label{cellpri}
\left\{
\begin{array}{rll}
\operatorname{div} \left(A_i(y) \left( I + \nabla \chi_i(y)\right)\right) &= 0 &\mbox{ in } Y, \\
\chi_i & - Y periodic, \\
\end{array}
\right. 
\end{equation}
for $i\in{1,2}$, and the solutions of the adjoint equations $\chi_1^*, \chi_2^*,$:
\begin{equation}
\label{cellpr*i}
\left\{
\begin{array}{rll}
\operatorname{div} \left(A_i^*(y) \left( I + \nabla \chi_i^*(y)\right)\right) &= 0 &\mbox{ in } Y, \\
\chi_1^* & - Y periodic, \\
\end{array}
\right. 
\end{equation}
where $A_1^*, A_1^*$ are the adjoints of $A_1, A_2$, $A_1^*=((a_1)^*_{ij})_{1\leq i, j \leq 3}$, $(a_1)^*_{ij} = (a_1)_{ji}$ and $A_2^*=((a_2)^*_{ij})_{1\leq i, j \leq 3}$, $(a_2)^*_{ij} = (a_2)_{ji}$ for $1 \leq i, j \leq 3$. It follows that $\chi_1^{\e}(y)=\chi_1\left(\dfrac{y}{\e}\right), \chi_2^{\e}(y)=\chi_2\left(\dfrac{y}{\e}\right)$ are the solutions for the equations:
\begin{equation}
\label{cellpre1}
\left\{
\begin{array}{rll}
\operatorname{div} \left(A_1^\e(y) \left( I + \e\nabla \chi_1^\e(y)\right)\right) &= 0 &\mbox{ in } \e Y, \\
\chi_1^\e& - \e Y periodic, \\
\end{array}
\right. 
\end{equation}

\begin{equation}
\label{cellpre2}
\left\{
\begin{array}{rll}
\operatorname{div} \left(A_2^\e(y) \left( I + \e\nabla \chi_2^\e(y)\right)\right) &= 0 &\mbox{ in } \e Y.\\
\chi_2^\e& - \e Y periodic, \\
\end{array}
\right. 
\end{equation}

We define now the homogenized operator $\o{A}_i$, for $i\in{1,2}$ as
\begin{equation}
\label{homopi}
\o{A}_i=\int_Y A_i(y) \left( I + \nabla \chi_i(y)\right) dy.
\end{equation}

\subsection{The fast motion equation}
\label{sec5}
In this subsection, we present some facts for the invariant measure associated
with (\ref{v}). 
We consider the following problem for fixed $\xi=(\xi_1, \xi_2) \in L^2(D)^2$,  an $L^2(D)^2$-valued Brownian motion $W$ on a probability space $(\Omega, \mathcal{F}, \mathcal{F}_t, {\mathbb P})$ and a bounded linear operator $Q$ on  $L^2(D)^2$ with trace class:

\begin{equation}
\label{v}
\left\{
\begin{array}{ll}
dv^\xi &= - (v^\xi-\xi)dt + \sqrt{Q} dW, \\
v^\xi(0) &= \eta = (\eta_1, \eta_2). 
\end{array}
\right. 
\end{equation}
This equation admits a unique mild solution $v^\xi(t)\in L^2(\Omega; C([0, T];L^2(D)^2))$ given by:
\begin{equation}
\label{vxieta}
v^\xi(t) = \eta e^{-t} +\xi(1-e^{-t}) + \int_0^t e^{-(t-s)}\sqrt{Q} dW. 
\end{equation}
When needed to specify the dependence with respect to the initial condition the solution will be denoted by $v^{\xi, \eta}(t)$. The following estimate can be 
derived for $v^{\xi, \eta}(t)$.

\begin{equation}\label{l}
\E \| v^{\xi, \eta}(t) \|^2_{L^2(D)^2} \leq 2\left(\|\eta\|^2_{L^2(D)^2} e^{-2t} + \|\xi\|^2_{L^2(D)^2} + TrQ \right). 
\end{equation}

%\subsection{The asymptotic behavior of the fast motion equation}
%\label{subs21}
We define the transition semigroups $P_t^\xi$ associated to the equation \eqref{v}:
\begin{equation}
P_t^\xi \Psi (\eta) = \E \Psi(v^{\xi, \eta}(t)), 
\end{equation}
for every $\Psi \in B_b(L^2(D)^2)$, the space of real valued Borel functions defined on $L^2(D)^2$, and every $\eta \in L^2(D)^2$. 
It is easy to verify that $P_t^\xi$ is a Feller semigroup because $\mathbb{P}$ a. s.
\begin{equation}\label{feller}
\|v^{\xi, \eta_1} - v^{\xi, \eta_2}\|^2_{L^2(D)^2} \leq e^{-2t} \|\eta_1-\eta_2 \|^2_{L^2(D)^2}. 
\end{equation}
We also denote by $\mu^\xi$ the associated invariant measure on $L^2(D)^2$. We recall that it is invariant for the semigroup $P_t^\xi$ if
$$\int_{L^2(D)^2} P_t^\xi \Psi (z) d\mu^\xi(z) = \int_{L^2(D)^2} \Psi (z) d\mu^\xi(z), $$
for every $\Psi \in B_b(L^2(D)^2)$. 
It is obvious that $v^\xi$ is a stationary gaussian process. The equation \eqref{v} admits a unique ergodic invariant measure $\mu^\xi$ that is strongly mixing and gaussian with mean $\xi$ and covariance operator $Q$. All these results can be found in \cite{DPZ2} or \cite{Cerrai}. 

As a consequence of \eqref{feller} we also have:
\begin{equation}\label{inv}
\left |P_t^\xi \Phi (\eta) - \int_{L^2(D)^2} \Phi (z) d\mu^\xi(z)\right |\leq c[\Phi] e^{-t}(1+\|\eta\|_{L^2(D)^2} +\|\xi \|_{L^2(D)^2}), 
\end{equation}
for any Lipschitz function $\Phi$ defined on $L^2(D)^2$, where $[\Phi]$ is the Lipschitz constant of $\Phi$. 

%This can be shown as it follows:
%\begin{equation}
%\begin{split}
%P_t^\xi \Phi (\eta) - \int_{L^2(D)^2} \Phi (z) d\mu^\xi(z) &= \int_{L^2(D)^2} \left(P_t^\xi \Phi (\eta) -P_t^\xi \Phi (z)\right) d\mu^\xi(z)\\
%&= \int_{L^2(D)^2} \left(\E \Phi (v^{\xi, \eta}(t)) - \E \Phi (v^{\xi, z}(t))\right) d\mu^\xi(z) \\
%& \leq \int_{L^2(D)^2} [\Phi] \E \left\|v^{\xi, \eta}(t) - v^{\xi, z}(t)\right\|_{L^2(D)^2} d\mu^\xi(z)\\
%&\leq \int_{L^2(D)^2} [\Phi] e^{-t}\E \left\|\eta-z\right\|_{L^2(D)^2} d\mu^\xi(z)\\
%&\leq [\Phi] e^{-t} \left(\|\eta\|_{L^2(D)^2} + \int_{L^2(D)^2} \left\|z\right\|_{L^2(D)^2} d\mu^\xi(z)\right). 
%\end{split}
%\end{equation}
%Now \eqref{inv} follows as a result of the following lemma:
%\begin{lemma}
%\label{mod}
%\begin{equation}
 %\int_{L^2(D)^2} \left\| z \right\|_{L^2(D)^2} d\mu^\xi(z)\leq c\left(1+\left\|\xi\right\|_{L^2(D)^2}\right). 
 %\end{equation}
 %\begin{proof}
%\begin{equation}L^2(D)^2
%\begin{split}
%\int_{L^2(D)^2} \|z\|_{L^2(D)^2}d\mu^\xi(z) &= \int_{L^2(D)^2} P_t^\xi \|z\|_{L^2(D)^2}d\mu^\xi(z)\\
%& = \int_{L^2(D)^2} \E \|v^{\xi, z}(t)\|_{L^2(D)^2}d\mu^\xi(z)\\
%&\leq \int_{L^2(D)^2} c(1 + \|\xi\|_{L^2(D)^2}+ e^{-t}\|z\|_{L^2(D)^2})d\mu^\xi(z). 
%\end{split}
%\end{equation}
%We fix now $t>0$ and get the result. 
%\end{proof}
%\end{lemma}

As described in the introduction, in order to pass to the limit on some terms of equation $\eqref{weaksole1}$ and  $\eqref{weaksole2}$, we will need to use the ergodic properties of the fast motion. However, the estimate \eqref{inv} is not enough and we will need to use a more refined ergodic result.  The remark and the lemma below have been introduced in our previous paper, see \cite{BEM_19}  in order to analyze a similar model. Indeed, the use of Lemma \ref{key} is essential to the analysis of our mathematical model \eqref{system1}.
 
\begin{remark}
For $\xi, \eta \in L^2(\Omega, \mathcal{F}_{t_0}, L^2(D)^2)$, let $v^{\xi, \eta}$ be the solution of the following system, the equivalent of the system \eqref{v} but with random initial conditions $\eta$ and random parameter $\xi$:
\begin{equation}
\label{vr}
\left\{
\begin{array}{ll}
dv^{\xi, \eta} &= - (v^{\xi, \eta}-\xi)dt + \sqrt{Q} dW, \\
v^{\xi, \eta}(t_0) &= \eta. 
\end{array}
\right. 
\end{equation}
The mild solution for \eqref{vr} $v^{\xi, \eta}(t)\in L^2(\Omega; C([t_0, T];L^2(D)^2))$ exists and is given by:
\begin{equation}
\label{vxietar}
v^{\xi, \eta}(t) = \eta e^{-(t-t_0)} +\xi(1-e^{-(t-t_0)}) + \int_0^{(t-t_0)} e^{-(t-t_0-s)}\sqrt{Q} dW. 
\end{equation}

The estimates provided by \eqref{l} and \eqref{inv} remains\ valid also in the case when $\xi$ and $\eta$ are random. So for any $\xi, \eta \in L^2(\Omega, \mathcal{F}_{t_0}, L^2(D)^2)$ we have:

\begin{equation}\label{l1}
\E \left( \|v^{\xi, \eta}(t)\| ^2_{L^2(D)^2}|\mathcal{F}_{t_0} \right)\leq 2\left(\|\eta\|^2_{L^2(D)^2} e^{-2(t-t_0)} + \|\xi\|^2_{L^2(D)^2}+ Tr Q\right), 
\end{equation}
and

\begin{equation}\label{invr}
\E\left(\left |(P_t)^{\xi(\omega)} \Phi (\eta(\omega)) - \int_{L^2(D)^2} \Phi (z) d\mu^{\xi(\omega)}(z)\right |\Big | \mathcal{F}_{t_0}\right) \leq c[\Phi] e^{-(t-t_0)}(1+\|\eta(\omega)\|_{L^2(D)^2} +\|\xi (\omega)\|_{L^2(D)^2}), 
\end{equation}
a. s. $\omega \in \Omega$, for any Lipschitz function $\Phi$ defined on $L^2(D)^2$. 
\end{remark}
The equation \eqref{invr} implies the following Lemma that has been first introduced in \cite{BEM_19} where a detailed proof can be found.

\begin{lemma}
\label{key}
Let $\Phi \in C^u([0, T]; L^\infty(\Omega;Lip(L^2(D)^2)))$ be an $\mathcal{F}_t$ - measurable process on $Lip(L^2(D)^2)$, and let $0\leq t_0 <t_0+\delta \leq T$. For $\xi, \eta \in L^2(\Omega, \mathcal{F}_{t_0}, L^2(D)^2)$, let $v^{\xi, \eta}$ be the solution of the system \eqref{vr}. We have:
\begin{equation}\label{estkey}
\begin{split}
&\E \left(\left| \frac{1}{\delta} \int_{t_0}^{t_0+\delta}\left( \Phi(s, v^{\xi, \eta}(s)) - \int_{L^2(D)^2}\Phi (s, z) d\mu^{\xi}(z)\right)ds\right| \Big | \mathcal{F}_{t_0}\right)\leq\\
 & c\left(1+\|\eta\|_{L^2(D)^2}+\|\xi\|_{L^2(D)^2}\right) \left(\frac{\|\Phi\|}{\sqrt{\delta}} +\sqrt{\|\Phi\|[\Phi] (\delta)} \right), 
\end{split}
\end{equation}
where $[\Phi]$ is the modulus of uniform continuity of $\Phi$ and $c$ is a positive constant. 
\end{lemma}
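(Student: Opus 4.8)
The plan is to reduce the estimate \eqref{estkey} to the refined ergodic bound \eqref{invr} by a Khasminskii-type partition of the interval $[t_0, t_0+\delta]$ into subintervals on which the process $v^{\xi,\eta}$ is "almost frozen," and then to optimize over the number of subintervals. First I would split $[t_0, t_0+\delta]$ into $N$ equal pieces of length $\delta/N$, with endpoints $s_k = t_0 + k\delta/N$, and on the $k$-th subinterval compare $\Phi(s, v^{\xi,\eta}(s))$ with $\Phi(s_k, v^{\xi, v^{\xi,\eta}(s_k)}(s))$, i.e. the process restarted at time $s_k$ from the value $v^{\xi,\eta}(s_k)$ — which by the flow property is the same process, so there is nothing to prove there, the point is rather that $v^{\xi,\eta}(s_k)$ is $\mathcal{F}_{s_k}$-measurable and can play the role of the random initial datum in \eqref{vr}. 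The genuine error terms are (a) replacing the time variable $s$ inside $\Phi$ by the frozen time $s_k$, controlled by $[\Phi](\delta/N)$, the modulus of uniform continuity in time; (b) the ergodic error from averaging $\Phi(s_k, \cdot)$ along the process against its invariant measure $\mu^\xi$, controlled via \eqref{invr} by a factor $e^{-(s-s_k)}$ integrated over a subinterval of length $\delta/N$, giving something like $N/\delta$ times $(1+\|\eta\|+\|\xi\|)$ after summing — wait, more precisely each subinterval contributes $[\Phi]\cdot(1+\|\eta\|+\|\xi\|)\cdot(\delta/N)$ worth of ergodic decay only if $\delta/N$ is large, and $[\Phi]\cdot(1+\|\eta\|+\|\xi\|)$ if $\delta/N$ is small; summing over $N$ subintervals gives $N\cdot[\Phi]\cdot(1+\|\eta\|+\|\xi\|)\cdot\min(\delta/N,1)$, and after dividing by $\delta$ this is bounded by $\|\Phi\|(1+\|\eta\|+\|\xi\|)/(\delta/N) \cdot (\delta/N) \cdot N/\delta$; I will need to be careful to track whether it is $\|\Phi\|$ or $[\Phi]$ that appears, and the statement has $\|\Phi\|$, so the ergodic term after summation and division by $\delta$ should be of order $\|\Phi\|\, (1+\|\eta\|+\|\xi\|)/(\delta/N)$.

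Concretely, after conditioning on $\mathcal{F}_{t_0}$ and inserting the intermediate quantities $\int_{L^2(D)^2}\Phi(s_k, z)\, d\mu^\xi(z)$ on each subinterval, the left side of \eqref{estkey} is bounded by three contributions: the time-regularity error $\le c[\Phi](\delta/N)$; the error between $\int \Phi(s,z)d\mu^\xi(z)$ and $\int \Phi(s_k, z)d\mu^\xi(z)$, again $\le [\Phi](\delta/N)$; and the ergodic error $\frac{1}{\delta}\sum_{k=0}^{N-1}\int_{s_k}^{s_{k+1}} \E\big(|P^\xi_{s-s_k}\Phi(s_k,\cdot)(v^{\xi,\eta}(s_k)) - \int\Phi(s_k,z)d\mu^\xi(z)| \,\big|\,\mathcal{F}_{t_0}\big)\,ds$. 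For this last term I would use the tower property to condition first on $\mathcal{F}_{s_k}$, apply \eqref{invr} with $t_0$ replaced by $s_k$ and with $\eta$ replaced by the $\mathcal{F}_{s_k}$-measurable random variable $v^{\xi,\eta}(s_k)$, bound $\|v^{\xi,\eta}(s_k)\|$ in conditional expectation via \eqref{l1}, and integrate $e^{-(s-s_k)}$ over $[s_k, s_{k+1}]$, which gives a factor $\le \min(1,\delta/N)$ per subinterval times $c[\Phi](1+\|\eta\|+\|\xi\|)$. Summing the $N$ terms and dividing by $\delta$ yields $\le c[\Phi](1+\|\eta\|+\|\xi\|)\cdot N/\delta \cdot \min(1,\delta/N)$; using $\min(1,\delta/N)\le 1$ this is $\le c[\Phi](1+\|\eta\|+\|\xi\|)N/\delta$, and since $[\Phi]\le 2\|\Phi\|$ it is $\le c\|\Phi\|(1+\|\eta\|+\|\xi\|)N/\delta$.

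Collecting the bounds gives, for every integer $N\ge 1$,
\begin{equation*}
\text{LHS of } \eqref{estkey} \le c(1+\|\eta\|+\|\xi\|)\left( [\Phi](\delta/N) + \frac{\|\Phi\|\,N}{\delta}\right).
\end{equation*}
It then remains to choose $N$ to balance the two terms. Choosing $N$ to be the integer part of $\sqrt{\delta/([\Phi]\ /\ \text{something})}$ is delicate because $[\Phi](\cdot)$ is only a modulus of continuity, not a power; instead one sets $N = \lceil \sqrt{\delta/\|\Phi\|}\,\cdot\,\sqrt{\|\Phi\|}\rceil$ — concretely, take $N \approx \delta / \sqrt{\delta\|\Phi\|} = \sqrt{\delta/\|\Phi\|}$ when that is $\ge 1$, so that $\|\Phi\|N/\delta \approx \sqrt{\|\Phi\|/\delta}$ and $\delta/N \approx \sqrt{\delta\|\Phi\|}$, hence $[\Phi](\delta/N)$ is at most $[\Phi](\sqrt{\delta\|\Phi\|})$. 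A cleaner route, matching the stated right-hand side exactly, is to use the elementary bound $[\Phi](r)\le \|\Phi\| \vee [\Phi](r) \le 2\|\Phi\| \cdot (1 \wedge \text{stuff})$ together with the concavity-type inequality $[\Phi](\delta/N) \le \sqrt{\|\Phi\|\,[\Phi](\delta/N)}$ when $[\Phi](\delta/N)\le \|\Phi\|$: with the choice $N$ of order $\sqrt{\delta/\|\Phi\|}$ one gets the first term $\le \sqrt{\|\Phi\|[\Phi](\delta)}$ (using monotonicity of $[\Phi]$ and $\delta/N\le\delta$) and the second term $\le \|\Phi\|/\sqrt{\delta}$, which is precisely the asserted bound.

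\textbf{Main obstacle.} The one subtle point I expect to require care is the bookkeeping in the optimization over $N$ when $[\Phi](\cdot)$ is a general modulus of continuity rather than a Hölder modulus: one must verify that the choice $N\sim\sqrt{\delta/\|\Phi\|}$ is legitimate (in particular $N\ge 1$, which forces a separate trivial treatment of the regime $\delta\le\|\Phi\|$ where the bound $\|\Phi\|/\sqrt\delta$ dominates anyway and the estimate is immediate from the crude bound $|\frac1\delta\int_{t_0}^{t_0+\delta}(\cdots)|\le 2\|\Phi\|$), and that after rounding $N$ to an integer the comparison terms are still controlled — but since the detailed proof is already in \cite{BEM_19}, I would state this reduction and refer there for the routine estimates, emphasizing that the two ingredients are the Khasminskii partition plus the refined decay \eqref{invr}.
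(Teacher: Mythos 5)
Your plan has a genuine gap at the step where you pass to the ``ergodic error''
$\frac{1}{\delta}\sum_{k}\int_{s_k}^{s_{k+1}} \E\bigl(\bigl|P^\xi_{s-s_k}\Phi(s_k,\cdot)(v^{\xi,\eta}(s_k)) - \int\Phi(s_k,z)\,d\mu^\xi(z)\bigr| \,\big|\,\mathcal{F}_{t_0}\bigr)\,ds$.
To reach that expression you must first push the absolute value inside the $ds$-integral, obtaining $\int \E(|G_k(s)|\,|\mathcal{F}_{t_0})\,ds$ with $G_k(s)=\Phi(s_k,v(s))-\int\Phi(s_k,z)\,d\mu^\xi(z)$, and then replace $\E(|G_k(s)|\,|\mathcal{F}_{s_k})$ by $|\E(G_k(s)\,|\,\mathcal{F}_{s_k})| = |P^\xi_{s-s_k}\Phi(s_k,\cdot)(v(s_k))-\int\Phi(s_k,z)\,d\mu^\xi(z)|$. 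By Jensen this replacement goes the wrong way: $\E(|X|\,|\mathcal F)\ge|\E(X\,|\mathcal F)|$, so you are estimating a lower bound of the quantity you need. Moreover $\E(|G_k(s)|\,|\mathcal{F}_{s_k})$ genuinely does not decay in $s-s_k$: the law of $v(s)$ converges to $\mu^\xi$, but $\Phi(s_k,v(s))$ keeps fluctuating around its mean at scale $\|\Phi\|$, so once the absolute value sits inside the time integral the decay in \eqref{invr} is unusable and the best available bound is $O(\|\Phi\|)$. A symptom that something is off is your final optimization: the choice $N\sim\sqrt{\delta/\|\Phi\|}$ yields $\|\Phi\|N/\delta\sim\sqrt{\|\Phi\|/\delta}$, which is \emph{not} the stated term $\|\Phi\|/\sqrt{\delta}$ (and exceeds it whenever $\|\Phi\|<1$).

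The missing idea is to keep the absolute value outside the time integral and pass to second moments; the square roots in \eqref{estkey} are the fingerprint of this. The proof in \cite{BEM_19} (in the spirit of \cite{Cerrai2009}) bounds the left side by the square root of
$\E\bigl(\bigl|\tfrac1\delta\int_{t_0}^{t_0+\delta}G(s)\,ds\bigr|^2\,\big|\,\mathcal F_{t_0}\bigr)
=\tfrac{2}{\delta^2}\int_{t_0}^{t_0+\delta}\int_{t_0}^{s}\E\bigl(G(s)G(r)\,\big|\,\mathcal F_{t_0}\bigr)\,dr\,ds$,
with $G(s)=\Phi(s,v(s))-\int\Phi(s,z)\,d\mu^\xi(z)$. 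For $r<s$ one first replaces $\Phi(s,\cdot)$ by the $\mathcal F_r$-measurable $\Phi(r,\cdot)$ at a cost $\|\Phi\|\,[\Phi](\delta)\,(1+\dots)^2$ per pair, which after the final square root produces the term $\sqrt{\|\Phi\|[\Phi](\delta)}$; one then conditions on $\mathcal F_r$, so that \eqref{invr} applies to $\E(G(s)\,|\,\mathcal F_r)$ and yields a factor $\|\Phi\|^2 e^{-(s-r)}(1+\dots)^2$, and integrating $e^{-(s-r)}$ over the triangle gives $O(\delta)$, hence $\|\Phi\|^2/\delta$ and the term $\|\Phi\|/\sqrt\delta$. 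No interior partition of $[t_0,t_0+\delta]$ is needed, and indeed any such partition only worsens the bound; the Khasminskii partition belongs to the application of the lemma (Lemma \ref{converg}), not to its proof.
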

\begin{proof} See \cite{BEM_19}
\end{proof}

%\section{Passage to the limit}
\subsection{Main result: The averaged system}
\label{sec6}

We introduce the following averaged operators:
\begin{equation}
\label{defoalphae}
\o{\alpha^\e}: L^2(D)^2  \to L^\infty(D), \ \ \o{\alpha^\e}(\xi_1, \xi_2) = \int_{L^2(D)^2}\alpha^\e(\eta_1, \eta_2) d\mu^{\xi} (\eta_1, \eta_2)
\end{equation}

\begin{equation}
\label{defoalpha}
\o{\alpha}: L^2(D)^2  \to L^\infty(D), \ \ \o{\alpha} (\xi_1, \xi_2) = \int_{L^2(D)^2} \left(\int_Y \alpha(y, z_1, z_2)dy\right) d\mu^{\xi} (z_1, z_2)\end{equation}

We remark that $\alpha^\e$ as an operator from $L^2(D)\times L^2(D)$ to $L^2(D)$ is Lipschitz and $L^2(D)$ is separable, so Pettis Theorem implies that $\alpha^\e: L^2(D)\times L^2(D) \to L^2(D)$ is measurable. The boundedness of $\alpha^\e$ implies the integrability with respect to the probability measure $\mu^{\xi} (\eta_1, \eta_2)$ is well defined (see Chapter 5, Sections 4 and 5 from \cite{Yosida} for details). The same considerations hold also for the operators $(z_1, z_2)\in L^2(D) \times L^2(D) \to \o{\alpha} (z_1,z_2)=\displaystyle\int_Y \alpha(y, z_1,z_2)dy \in L^\infty(D)$, so $\o{\alpha}$ is also well defined. 
Our main result is given in the next theorem.
\begin{theorem}
\label{thconv1}
Assume the sequences $u^\e_{01}, u^\e_{02}$ are uniformly bounded in $H_0^1(D))$ and strongly convergent in $L^2(D)$ to some functions $u_{01}, u_{02}$, and $v^\e_{01}, v^\e_{02}$ are uniformly bounded in $L^2(D)$. Then, there exist $\o{u}_1, \o{u}_2 \in L^2(0, T;H_0^1(D))\cap C([0, T]; L^2(D))$ such that $u_1^\e, u_2^\e$ converge in probability to $\o{u}_1, \o{u}_2$ in $w\mbox{-}L^2(0, T;H_0^1(D))\cap C([0, T]; L^2(D))$ and $\{\o{u}_1, \o{u}_2\}$ is the solution of the following deterministic equation:
\begin{equation}
\label{eqou1}
\left\{
\begin{array}{rll}
\dfrac{\partial \o{u}_1}{\partial t} &= \operatorname{div} \left( \o{A}_1 \nabla\o{u}_1\right) + \o{\alpha} (\beta_{11}\o{u}_1+\beta_{12}\o{u}_2,\beta_{21}\o{u}_1+\beta_{22}\o{u}_2) (\o{u}_2-\o{u}_1) + f_1 &\mbox{ in }\ D, \\
\dfrac{\partial \o{u}_2}{\partial t} &= \operatorname{div} \left( \o{A}_2 \nabla\o{u}_2\right) + \o{\alpha} (\beta_{11}\o{u}_1+\beta_{12}\o{u}_2,\beta_{21}\o{u}_1+\beta_{22}\o{u}_2) (\o{u}_1-\o{u}_2) + f_2 &\mbox{ in }\ D, \\
\o{u}_1 &=0 &\mbox{ on }\ \partial D, \\
\o{u}_2 &=0 &\mbox{ on }\ \partial D, \\
\o{u}_1(0) & = u_{01}&\mbox{ in }\ D, \\
\o{u}_2(0) & = u_{02}&\mbox{ in }\ D. 
\end{array}
\right. 
\end{equation}
\end{theorem}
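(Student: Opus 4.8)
The plan is to combine the uniform estimates of Theorems \ref{thexun} and \ref{threg} (which give compactness of $u_1^\e, u_2^\e$ in the slow variable) with two-scale/homogenization convergence for the diffusion terms and the Khasminskii-type averaging Lemma \ref{key} for the coupling term \eqref{term}. The heart of the argument is that the fast process $(v_1^\e, v_2^\e)$ is, on short time intervals, close to the solution of \eqref{vr} with frozen parameter $\xi = (\beta_{11}u_1^\e + \beta_{12}u_2^\e, \beta_{21}u_1^\e + \beta_{22}u_2^\e)$, so its time-averaged effect on $u^\e$ is given by integration against $\mu^\xi$.

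First I would extract convergent subsequences. From \eqref{est2'}, \eqref{est3'} and the Aubin–Lions lemma, $u_i^\e$ is tight in $C([0,T];L^2(D))$ and bounded in $L^2(0,T;H_0^1(D))$; by Prokhorov and Skorokhod one passes to a subsequence converging in law, and (after the usual Skorokhod realization) almost surely, to limits $\o u_1, \o u_2 \in L^2(0,T;H_0^1(D)) \cap C([0,T];L^2(D))$. Since the fluxes $A_i^\e \nabla u_i^\e$ are bounded in $L^2$, one uses the classical periodic homogenization result (e.g.\ the oscillating test function method of Tartar, or two-scale convergence, using the cell correctors $\chi_i, \chi_i^*$ of \eqref{cellpri}–\eqref{cellpr*i}) to identify $\operatorname{div}(A_i^\e \nabla u_i^\e) \to \operatorname{div}(\o A_i \nabla \o u_i)$ in the weak-$H^{-1}$ sense, with $\o A_i$ as in \eqref{homopi}. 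The forcing terms $f_i$ pass trivially and the initial conditions converge by hypothesis.

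The main obstacle is the coupling term \eqref{term}. I would first replace $\alpha^\e(v_1^\e, v_2^\e)$ by $\o{\alpha^\e}(\beta_{11}u_1^\e + \beta_{12}u_2^\e, \beta_{21}u_1^\e + \beta_{22}u_2^\e)$ using a Khasminskii time-discretization: partition $[0,T]$ into intervals of length $\delta = \delta(\e) \to 0$ with $\delta/\e \to \infty$, freeze $(u_1^\e, u_2^\e)$ and the parameter $\xi^\e$ at the left endpoint of each interval, and apply Lemma \ref{key} with $\Phi(s,\cdot) = \langle \alpha^\e(\cdot)(u_1^\e(s)-u_2^\e(s)), \phi\rangle$; the continuity of $s\mapsto u_i^\e(s)$ in $L^2$ uniformly in $\e$ (from \eqref{est3'}) controls the error from freezing, and the $\mathcal F_{t_0}$-conditional estimate \eqref{estkey} together with \eqref{est2}, \eqref{est4} and the choice of $\delta$ shows the difference tends to $0$ in $L^1(\Omega)$. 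Next, since $\mu^\xi$ is Gaussian with mean $\xi$ and covariance $Q$, a direct computation combined with the periodic homogenization Lemma (Allaire, Lemma 1.3, as cited) shows $\o{\alpha^\e}(\xi_1^\e,\xi_2^\e) \to \int_Y \alpha(y,\cdot,\cdot)\,dy$ integrated against $\mu^{\o\xi}$, i.e.\ $\o\alpha$ evaluated at the limiting frozen parameters; here one uses that $\alpha$ is bounded and Lipschitz in its last two arguments uniformly in $y$, so $\o{\alpha^\e}$ inherits Lipschitz bounds and the weak-$*$ $L^\infty$ oscillation in $x/\e$ averages out against the strongly convergent factor $(u_1^\e - u_2^\e)\phi$. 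Finally, because $\o u_1, \o u_2$ solve a \emph{deterministic} system and that system has a unique solution (standard for this quasilinear parabolic system, using that $\o\alpha$ is bounded Lipschitz and $\o A_i$ uniformly elliptic), convergence in law upgrades to convergence in probability, and the full sequence converges. Assembling the weak formulations \eqref{weaksole1}–\eqref{weaksole2} and passing to the limit term by term yields \eqref{eqou1}.

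Two technical points deserve care and would be addressed in the full proof: showing that the frozen-parameter process appearing in the Khasminskii scheme genuinely satisfies the hypotheses of Lemma \ref{key} (i.e.\ $\xi^\e, \eta^\e$ are $\mathcal F_{t_0}$-measurable and $L^2$-bounded, which follows from \eqref{est2}, \eqref{est4} and the explicit mild formula \eqref{mildsole}); and verifying that the map $(\xi_1,\xi_2) \mapsto \o{\alpha^\e}(\xi_1,\xi_2)$ depends continuously (indeed Lipschitz) on its arguments so that plugging in $\xi_i^\e \to \o\xi_i$ is legitimate after the averaging step. I expect the interplay between the two limits — the averaging scale $\delta/\e \to \infty$ needed for Lemma \ref{key} and the homogenization scale $\e \to 0$ needed for the spatial oscillations — to be the most delicate bookkeeping, exactly as in \cite{BEM_19}, but no new idea beyond that paper should be required.
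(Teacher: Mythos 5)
Your proposal follows essentially the same route as the paper: tightness from the uniform estimates plus Prokhorov--Skorokhod, Tartar's oscillating test functions with the adjoint correctors $\chi_i^*$ and Allaire's Lemma~1.3 for the diffusion term, the three-step reduction of the coupling term (Khasminskii freezing with Lemma~\ref{key} to pass from $\alpha^\e(v^\e)$ to $\o{\alpha^\e}$ at the frozen parameter, Lipschitz continuity of $\o{\alpha^\e}$ to replace $u^\e$ by $\o u$, and the periodic-averaging lemma to replace $\o{\alpha^\e}$ by $\o\alpha$), and finally uniqueness of the deterministic limit system to upgrade convergence in law to convergence in probability. The technical points you flag (measurability and $L^2$-bounds of the frozen data, the choice $\delta^\e=\sqrt{\e}$ balancing $\delta\to 0$ against $\delta/\e\to\infty$) are exactly the ones the paper's proof addresses, so no gap.
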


\subsection{Well-possedness for the averaged equation \eqref{eqou1}} \label{subs41}

We state here that the averaged  system \eqref{eqou1} is well posed while its proof will be postponed to the section on proofs. 
 
 \begin{theorem}\label{thexunou}
Assume $f_1, \ f_2 \in L^2(0, T;L^2(D))$ and $\o{\alpha} \in Lip_b(\mathbb{R}^2)$. Then, for any $u_{01},\ u_{02} \in L^2(D)$ the system \eqref{eqou1} admits a unique solution $\o{u}_1,\ \o{u}_2 \in C([0, T];L^2(D))\cap L^2(0, T;H_0^1(D)))$ with $\dfrac{\partial \o{u}_1}{\partial t} , \ \dfrac{\partial \o{u}_2}{\partial t} \in L^2(0, T;H^{-1}(D))$ in the following sense:
\begin{equation}
\label{weaksolou}
\begin{split}
&\int_D \o{u}_1(t) \phi dx - \int_D u_{01} \phi dx + \int_0^t \int_D \o{A}_1\nabla \o{u}_1 (s) \nabla \phi dx ds =\\
&\int_0^t \int_D \o{\alpha}(\beta_{11}\o{u}_1 + \beta_{12}\o{u}_2, \beta_{21}\o{u}_1 + \beta_{22}\o{u}_2) (\o{u}_2-\o{u}_1) \phi dx ds + \int_0^t \int_D f_1(s) \phi dx ds\\
&\int_D \o{u}_2(t) \phi dx - \int_D u_{02} \phi dx + \int_0^t \int_D \o{A}_2\nabla \o{u}_2 (s) \nabla \phi dx ds =\\
&\int_0^t \int_D \o{\alpha}(\beta_{11}\o{u}_1 + \beta_{12}\o{u}_2, \beta_{21}\o{u}_1 + \beta_{22}\o{u}_2) (\o{u}_1-\o{u}_2) \phi dx ds + \int_0^t \int_D f_2(s) \phi dx ds,
\end{split}
\end{equation}
for every $t\in[0, T]$ and every $\phi \in H_0^1(D)$. Moreover, if the initial condition $u_{01}, \ u_{02} \in H_0^1(D)$, then $\o{u}_1, \ \o{u}_2$ have the improved regularity, $\o{u}_1, \ \o{u}_2 \in  L^\infty(0, T;H_0^1(D))$ and $\dfrac{ \partial \o{u}_1}{\partial t} , \ \dfrac{ \partial \o{u}_2}{\partial t}\in L^2(0, T; L^2(D))$. 
\end{theorem}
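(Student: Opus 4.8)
The plan is to prove Theorem \ref{thexunou} by a standard Galerkin approximation in the spatial variable, followed by uniform a priori estimates, compactness, and passage to the limit; uniqueness and the improved regularity are handled separately.

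First, I would fix an orthonormal basis $\{e_k\}_{k\geq 1}$ of $L^2(D)$ consisting of eigenfunctions of $-\Delta$ with Dirichlet boundary conditions, which is also orthogonal in $H_0^1(D)$, and seek finite-dimensional approximations $\o{u}_1^n(t)=\sum_{k=1}^n c_{1k}^n(t)e_k$, $\o{u}_2^n(t)=\sum_{k=1}^n c_{2k}^n(t)e_k$ solving the projected ODE system obtained by testing \eqref{weaksolou} against $e_1,\dots,e_n$. The nonlinearity $\o{\alpha}(\beta_{11}\o{u}_1+\beta_{12}\o{u}_2,\beta_{21}\o{u}_1+\beta_{22}\o{u}_2)(\o{u}_2-\o{u}_1)$ is a continuous (indeed locally Lipschitz) function of $(c_{1k}^n,c_{2k}^n)$ since $\o{\alpha}$ is bounded and Lipschitz on $\mathbb{R}^2$, so by Carathéodory/Picard theory the system has a unique local solution. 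To get global existence I would derive the basic energy estimate: test the first equation with $\o{u}_1^n$ and the second with $\o{u}_2^n$, add them, and use the coercivity \eqref{A} of $\o{A}_1,\o{A}_2$ (which is inherited by the homogenized matrices — this is standard and can be cited from \cite{Allaire,BLP}, or recovered from \eqref{homopi}) together with the crucial sign structure: the coupling terms combine to $-\int_D \o{\alpha}(\cdots)(\o{u}_1-\o{u}_2)^2\,dx \leq 0$ because $\o{\alpha}\geq 0$ is inherited from $\alpha$ — or, even without a sign assumption on $\o{\alpha}$, one bounds this term by $2\|\o{\alpha}\|_\infty(\|\o{u}_1^n\|^2+\|\o{u}_2^n\|^2)$ and absorbs it via Gronwall. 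Together with Young's inequality on $\int f_i\o{u}_i^n$ this yields
\begin{equation*}
\sup_{t\in[0,T]}\bigl(\|\o{u}_1^n(t)\|_{L^2}^2+\|\o{u}_2^n(t)\|_{L^2}^2\bigr)+\int_0^T\bigl(\|\o{u}_1^n\|_{H_0^1}^2+\|\o{u}_2^n\|_{H_0^1}^2\bigr)dt\leq C\bigl(\|u_{01}\|^2+\|u_{02}\|^2+\|f_1\|_{L^2(L^2)}^2+\|f_2\|_{L^2(L^2)}^2\bigr),
\end{equation*}
uniformly in $n$, which gives global existence of the Galerkin solutions and bounds in $L^\infty(0,T;L^2(D))\cap L^2(0,T;H_0^1(D))$.

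Next I would estimate the time derivative: from the equation, $\partial_t\o{u}_i^n = P_n\bigl(\operatorname{div}(\o{A}_i\nabla\o{u}_i^n)+\o{\alpha}(\cdots)(\o{u}_j^n-\o{u}_i^n)+f_i\bigr)$, where $P_n$ is the $L^2$-projection onto $\mathrm{span}\{e_1,\dots,e_n\}$, which is bounded on $H^{-1}(D)$ by orthogonality of the basis; the boundedness of $\o{\alpha}$ and the $H_0^1$ bound give $\partial_t\o{u}_i^n$ bounded in $L^2(0,T;H^{-1}(D))$ uniformly in $n$. Then by Banach--Alaoglu and the Aubin--Lions--Simon lemma, along a subsequence $\o{u}_i^n\rightharpoonup\o{u}_i$ weakly in $L^2(0,T;H_0^1(D))$, weakly-$*$ in $L^\infty(0,T;L^2(D))$, $\partial_t\o{u}_i^n\rightharpoonup\partial_t\o{u}_i$ weakly in $L^2(0,T;H^{-1}(D))$, and strongly in $L^2(0,T;L^2(D))$ and in $C([0,T];H^{-1}(D))$. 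Passing to the limit in the linear terms is immediate by weak convergence; for the nonlinear term, the strong $L^2(0,T;L^2(D))$ convergence of $\o{u}_i^n$ (hence a.e. after a further subsequence) plus the continuity and boundedness of $\o{\alpha}$ and dominated convergence give $\o{\alpha}(\beta_{11}\o{u}_1^n+\beta_{12}\o{u}_2^n,\dots)(\o{u}_2^n-\o{u}_1^n)\to\o{\alpha}(\beta_{11}\o{u}_1+\beta_{12}\o{u}_2,\dots)(\o{u}_2-\o{u}_1)$ in $L^2(0,T;L^2(D))$. The initial condition is recovered from $C([0,T];H^{-1}(D))$ convergence, and membership in $C([0,T];L^2(D))$ follows from $\o{u}_i\in L^2(0,T;H_0^1)$ with $\partial_t\o{u}_i\in L^2(0,T;H^{-1})$ by the Lions--Magenes interpolation theorem.

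For uniqueness, I would take two solutions, subtract, test the difference equations with $\o{u}_1-\tilde{u}_1$ and $\o{u}_2-\tilde{u}_2$, and use the Lipschitz bound on $\o{\alpha}$ together with the uniform $L^2$-in-space bounds to control the resulting terms; the delicate point is that the nonlinearity is quadratic-type ($\o{\alpha}(w)(\o{u}_2-\o{u}_1)$ with $w$ depending on the solution), so after splitting $\o{\alpha}(w)(\o{u}_2-\o{u}_1)-\o{\alpha}(\tilde w)(\tilde u_2-\tilde u_1) = [\o{\alpha}(w)-\o{\alpha}(\tilde w)](\o{u}_2-\o{u}_1) + \o{\alpha}(\tilde w)[(\o{u}_2-\tilde u_2)-(\o{u}_1-\tilde u_1)]$ one bounds the first bracket by $[\o{\alpha}]\,|w-\tilde w|$ and must absorb the factor $(\o{u}_2-\o{u}_1)$; in three dimensions $\o{u}_i$ need not be in $L^\infty$, but pairing with the difference which is only in $L^2$ and using $\o{u}_i\in L^2(0,T;H_0^1)\hookrightarrow L^2(0,T;L^6)$ one applies Hölder $L^6\cdot L^6\cdot L^{3/2}$ together with the $H_0^1$-interpolation $\|\cdot\|_{L^3}\lesssim\|\cdot\|_{L^2}^{1/2}\|\cdot\|_{H_0^1}^{1/2}$ and Young's inequality to extract a small multiple of the gradient of the difference plus an integrable-in-time times $\|{\rm difference}\|_{L^2}^2$; then Gronwall closes it. I expect this absorption in the uniqueness proof — managing the quadratic coupling term in dimension three without an $L^\infty$ bound — to be the main technical obstacle; in \cite{BEM_19} the analogous step was eased by a structural feature of the simpler model, so here I would either invoke the improved $H_0^1$ regularity from Theorem \ref{threg}-type arguments or carry out the $L^6/L^3$ interpolation carefully. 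Finally, the improved regularity when $u_{01},u_{02}\in H_0^1(D)$ is obtained by testing the Galerkin equations with $\partial_t\o{u}_i^n$ (legitimate since $\partial_t\o{u}_i^n\in\mathrm{span}\{e_k\}$): this produces $\|\partial_t\o{u}_i^n\|_{L^2(L^2)}^2 + \frac12\frac{d}{dt}\int_D\o{A}_i\nabla\o{u}_i^n\cdot\nabla\o{u}_i^n$, and controlling the nonlinear and forcing terms on the right by Young's inequality (using $\|\o{\alpha}\|_\infty$ and the already-established $L^\infty(L^2)$ bound) gives, after integration and Gronwall, uniform bounds on $\o{u}_i^n$ in $L^\infty(0,T;H_0^1(D))$ and on $\partial_t\o{u}_i^n$ in $L^2(0,T;L^2(D))$, which pass to the limit by lower semicontinuity.
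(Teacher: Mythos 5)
Your proposal is correct and follows essentially the same route as the paper: a Galerkin approximation, the standard energy estimate via testing with $\o{u}_{in}$ and Gr\"{o}nwall, an $H^{-1}$ bound on the time derivative, Aubin--Lions compactness to pass to the limit in the nonlinear term, uniqueness by subtracting two solutions and absorbing the quadratic coupling term through Sobolev embedding (the paper uses $H_0^1(D)\hookrightarrow L^4(D)$ and a Gr\"{o}nwall weight involving $\|\nabla\o{u}_i\|_{L^2}^2$, which is the same device as your $L^6/L^3$ interpolation), and improved regularity by testing with $\partial_t\o{u}_{in}$. No gaps worth noting.
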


\section{Proofs}

\subsection{Proof of Theorem \ref{thconv1}}
\label{subs45}
In order to prove the theorem, we first need to prove that

\begin{equation}
\label{dif}
\begin{split}
\lim_{\e \to 0} &\ \E \left | \int_0^T \int_D \left( \alpha^\e(v^\e_1, v^\e_2) (u_2^\e-u_1^\e) - \o{\alpha}(\beta_{11}\o{u}_1+ \beta_{12}\o{u}_2, \beta_{21}\o{u}_1+ \beta_{22}\o{u}_2) (\o{u}_2-\o{u}_1) \right)\phi^\e \psi dx dt \right | = 0, 
\end{split}
\end{equation}
for a particular sequence $\phi^\e \in H^1_0(D)$ and $\psi \in C[0, T]$. 
We rewrite it as a sum:
$$\int_0^T \int_D \left( \alpha^\e(v^\e_1, v^\e_2) (u_2^\e-u_1^\e) - \o{\alpha}(\beta_{11}\o{u}_1 + \beta_{12}\o{u}_2, \beta_{21}\o{u}_1 + \beta_{22}\o{u}_2) (\o{u}_2-\o{u}_1) \right)\phi^\e \psi dx dt = S^\e_1 + S^\e_2 +S^\e_3, $$
where
\begin{equation}\nonumber
S^\e_1 = \int_0^T \int_D \left( \alpha^\e(v^\e_1, v^\e_2) (u_2^\e-u_1^\e) - \o{\alpha^\e}(\beta_{11}u^\e_1 + \beta_{12}u^\e_2, \beta_{21} u^\e_1 + \beta_{22}u^\e_2) (u_2^\e-u_1^\e) \right)\phi^\e \psi dx dt, 
\end{equation}
\begin{equation}\nonumber
S^\e_2 = \int_0^T \int_D \left( \o{\alpha^\e}(\beta_{11}u^\e_1 + \beta_{12}u^\e_2, \beta_{21} u^\e_1 + \beta_{22}u^\e_2) (u_2^\e-u_1^\e)- \o{\alpha^\e}(\beta_{11}\o{u}_1 + \beta_{12}\o{u}_2, \beta_{21}\o{u}_1 + \beta_{22}\o{u}_2) (\o{u}_2-\o{u}_1) \right)\phi^\e \psi dx dt, 
\end{equation}
and
\begin{equation}\nonumber
S^\e_3 = \int_0^T \int_D \left(\o{\alpha^\e}(\beta_{11}\o{u}_1 + \beta_{12}\o{u}_2, \beta_{21}\o{u}_1 + \beta_{22}\o{u}_2) (\o{u}_2-\o{u}_1) - \o{\alpha}(\beta_{11}\o{u}_1 + \beta_{12}\o{u}_2, \beta_{21}\o{u}_1 + \beta_{22}\o{u}_2) (\o{u}_2-\o{u}_1)\right)\phi^\e \psi dxdt. 
\end{equation}
The convergence to $0$ for $S^\e_1$ is performed by proving the more general result \eqref{converg} where the equation satisfied by $u^\e$ is not important. The idea of proving \eqref{converg} is to approximate $u^\e$ and $\phi^\e$ by step functions in time and use Lemma \ref{key} on each piece. Then the convergence of $S^\e_2$ and  $S^\e_3$ to 0 are proved below. 

 The sequence $\wt{u_1^\e}$ given by Skorokhod theorem converges a. s. to $\wt{\o{u}_1}$ weakly in $L^2(0, T;H^1_0(D))$ and strongly in $C([0, T];L^2(D))$ so 
\begin{equation}\label{dif1}
\lim_{\e \to 0} \left | \int_0^T \int_D \left(\wt{u_1^\e} (t)-\wt{\o{u}_1} (t)\right) \phi^\e \psi'(t) dx dt - \int_0^T \int_D \left( A_1^\e \nabla \wt{u_1^\e} - \o{A}_1\nabla\wt{\o{u}_1} \right) \nabla \phi \psi(t) dx dt\right | = 0, \quad a. s. \end{equation}
%pointwise in $\Omega '$. 
The equations \eqref{dif} and \eqref{dif1} imply that $\wt{\o{u}_1}$ satisfies almost surely the variational formulation associated with \eqref{eqou1}, so $\wt{\o{u}_1}$ and $\o{u}_1$ are deterministic and as a consequence the convergence of the sequence $u_1^\e$ to $\o{u}_1$ will be in probability. Similarly we get the convergence for $u_2^\e$ to $\o{u}_2$.

\subsubsection{Convergence of $S^\e_1$}\label{subs42}

\begin{lemma}
\label{converg}
Assume that $u^\e$ is a sequence of $\mathcal{F}_t$ - measurable processes in $L^2(D)^2$, uniformly bounded in $L^\infty(\Omega, W^{1, 2}(0, T;L^{2}(D)^2))$, $\phi^\e$ a sequence of $\mathcal{F}_t$ - measurable processes in $L^2(D)^2$, such that $\phi^\e \in L^\infty(\Omega;C^u([0, T] ;L^2(D)^2))$ uniformly bounded and equiuniform continuous with respect to $\e >0$ and $\omega\in\Omega$. Let the sequence $v^\e$ satisfy the equation
\begin{equation} \label{ve}
\left\{
\begin{array}{rll}
d v^\e(t, x) &= -\dfrac{1}{\e} (v^\e(t, x)-u^\e(t, x))dt + \sqrt {\dfrac{Q}{\e}} dW(t, x) &\mbox{ in }\ [0, T] \times D, \\
v^\e(0, x) & = v_0^\e(x)&\mbox{ in }\ D, 
\end{array}\right. 
\end{equation}
with the sequence $v_0^\e$ uniformly bounded in $L^2(D)^2$. Then we have that:
\begin{equation}
\lim_{\e\to 0} \E \left| \int_0^T \int_D \left(\alpha^\e(v^\e(t)) - \o{\alpha}^\e (u^\e(t))\right) \phi^\e(t) dx dt\right| =0. 
\end{equation}
\end{lemma}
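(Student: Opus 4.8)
The plan is to run the classical Khasminskii time-averaging scheme --- approximating $u^\e$ and $\phi^\e$ by step functions in time and invoking Lemma~\ref{key} on each piece, exactly as in the analogous statement of \cite{BEM_19}. Fix $\delta>0$, let $N=\lfloor T/\delta\rfloor$, $t_k=k\delta$, and on each slab $[t_k,t_{k+1}]$ introduce the \emph{frozen} fast process $\o v^\e_k$, i.e.\ the solution on $[t_k,t_{k+1}]$ of $d\o v^\e_k=-\tfrac1\e(\o v^\e_k-u^\e(t_k))\,dt+\sqrt{Q/\e}\,dW$ with $\o v^\e_k(t_k)=v^\e(t_k)$; freezing $u^\e$ inside the $\alpha^\e$--term is realized through this frozen SDE, whose drift parameter is constant on the slab. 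A telescoping decomposition then gives
\begin{equation*}
\int_0^T\!\!\int_D\bigl(\alpha^\e(v^\e)-\o{\alpha^\e}(u^\e)\bigr)\phi^\e\,dx\,dt=\sum_{k=0}^{N-1}\bigl(I_k+II_k+III_k+IV_k+V_k\bigr),
\end{equation*}
where on $[t_k,t_{k+1}]$ the term $I_k$ replaces $v^\e(t)$ by $\o v^\e_k(t)$ inside $\alpha^\e$, $II_k$ replaces $\phi^\e(t)$ by $\phi^\e(t_k)$ in the $\alpha^\e$--term, $III_k=\int_{t_k}^{t_{k+1}}\!\int_D(\alpha^\e(\o v^\e_k(t))-\o{\alpha^\e}(u^\e(t_k)))\phi^\e(t_k)\,dx\,dt$ is the genuine averaging term, $IV_k$ replaces $u^\e(t_k)$ by $u^\e(t)$ inside $\o{\alpha^\e}$, and $V_k$ replaces $\phi^\e(t_k)$ by $\phi^\e(t)$ in the $\o{\alpha^\e}$--term. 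I would bound the expectations of $|\sum_k I_k|$, $|\sum_k II_k|$, $|\sum_k IV_k|$, $|\sum_k V_k|$ by quantities uniform in $\e$ that vanish as $\delta\to0$, and $\E|\sum_k III_k|$ by a quantity vanishing as $\e\to0$ for each fixed $\delta$; then one sends $\e\to0$ first and $\delta\to0$ afterwards.

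For the freezing errors I would use $\o v^\e_k(t)-v^\e(t)=\tfrac1\e\int_{t_k}^t e^{-(t-s)/\e}(u^\e(t_k)-u^\e(s))\,ds$, whence $\|\o v^\e_k(t)-v^\e(t)\|_{L^2(D)^2}\le\sup_{s\in[t_k,t]}\|u^\e(s)-u^\e(t_k)\|\le a_k:=\int_{t_k}^{t_{k+1}}\|\partial_t u^\e(r)\|\,dr$, together with the pointwise bound $|\alpha^\e(w)(x)-\alpha^\e(w')(x)|\le[\alpha]\,|w(x)-w'(x)|$; pairing with $\phi^\e$ and using Cauchy--Schwarz in $x$ and then in $k$ (with $\sum_k a_k^2\le\delta\|\partial_t u^\e\|^2_{L^2(0,T;L^2)}$) gives $\E|\sum_k I_k|\le[\alpha]\,\|\phi^\e\|_{C_b}\sum_k\delta a_k\le C\,T^{1/2}\,\delta\,\|\partial_t u^\e\|_{L^2(0,T;L^2)}$, which is $\le C\delta$ uniformly in $\e$ by the hypothesis on $u^\e$. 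The same bound holds for $\E|\sum_k IV_k|$, once one notes that $\o{\alpha^\e}\colon L^2(D)^2\to L^2(D)$ is Lipschitz: coupling $\mu^\xi$ and $\mu^{\xi'}$ through their common covariance $Q$ (they are Gaussian with means $\xi$, $\xi'$) gives $\|\o{\alpha^\e}(\xi)-\o{\alpha^\e}(\xi')\|_{L^2}\le\E\|\alpha^\e(\zeta+\xi)-\alpha^\e(\zeta+\xi')\|_{L^2}\le\sqrt2\,[\alpha]\,\|\xi-\xi'\|$. For $II_k$, $V_k$ I would use $\|\alpha^\e(\cdot)\|_{L^\infty(D)},\|\o{\alpha^\e}(\cdot)\|_{L^\infty(D)}\le\|\alpha\|_\infty$ and the common modulus of uniform continuity $\rho$ of $\{\phi^\e(\omega,\cdot)\}_{\e,\omega}$ (this is precisely the equi-uniform-continuity hypothesis), obtaining $\E|\sum_k II_k|+\E|\sum_k V_k|\le 2\|\alpha\|_\infty|D|^{1/2}\,T\,\rho(\delta)\to0$ as $\delta\to0$.

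The crux is $III_k$. On $[t_k,t_{k+1}]$ the time change $\sigma=t/\e$ turns $\o v^\e_k$ into the solution $v^{\xi,\eta}$ of \eqref{vr} with frozen parameter $\xi=u^\e(t_k)$, random initial datum $\eta=v^\e(t_k)$ and start time $t_0=t_k/\e$, for the rescaled filtration $\widetilde{\mathcal F}_\sigma=\mathcal F_{\e\sigma}$ and Brownian motion $\e^{-1/2}W(\e\,\cdot\,)$. Setting $\Phi(z)=\int_D\alpha^\e(z)(x)\phi^\e(t_k)(x)\,dx$ --- which is $\widetilde{\mathcal F}_{t_0}$--measurable (since $\phi^\e(t_k)$ is $\mathcal F_{t_k}$--measurable), time-independent (so its modulus of uniform continuity vanishes), and Lipschitz on $L^2(D)^2$ with $\|\Phi\|_{Lip(L^2(D)^2)}\le C\|\phi^\e\|_{C_b}=:K$ uniformly in $\e,\omega,k$ --- and pairing the Bochner integral defining $\o{\alpha^\e}$ with the functional $w\mapsto\int_D w\,\phi^\e(t_k)\,dx$ gives $\int_D\o{\alpha^\e}(u^\e(t_k))\phi^\e(t_k)\,dx=\int_{L^2(D)^2}\Phi\,d\mu^{u^\e(t_k)}$, hence $III_k=\e\int_{t_0}^{t_0+\delta/\e}\bigl(\Phi(v^{\xi,\eta}(\sigma))-\int_{L^2(D)^2}\Phi\,d\mu^{\xi}\bigr)\,d\sigma$. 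Lemma~\ref{key}, applied on $[t_0,t_0+\delta/\e]$ (with time horizon $T/\e$ and $[\Phi]\equiv0$), then gives
\begin{equation*}
\E\bigl(|III_k|\,\big|\,\widetilde{\mathcal F}_{t_0}\bigr)\le c\,\delta\,(1+\|v^\e(t_k)\|+\|u^\e(t_k)\|)\,\frac{K}{\sqrt{\delta/\e}}=cK\sqrt{\e\delta}\,(1+\|v^\e(t_k)\|+\|u^\e(t_k)\|).
\end{equation*}
Taking full expectation, bounding the second moments of $v^\e(t_k)$ and $u^\e(t_k)$ uniformly in $\e$ (via the mild formula for \eqref{ve}, whose stochastic-convolution term is $\e$--uniformly bounded, and the embedding $W^{1,2}(0,T;L^2)\hookrightarrow C([0,T];L^2)$, respectively), and summing the $N\le T/\delta+1$ slabs yields $\E|\sum_k III_k|\le C\sqrt{\e/\delta}$. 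Altogether, for each fixed $\delta>0$, $\limsup_{\e\to0}\E\bigl|\int_0^T\!\int_D(\alpha^\e(v^\e)-\o{\alpha^\e}(u^\e))\phi^\e\,dx\,dt\bigr|\le C\delta+C\rho(\delta)$, and letting $\delta\to0$ concludes.

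The main obstacle is exactly the ordering of these two limits and the bookkeeping it requires: one cannot let $\e$ and $\delta$ go to zero together in a naive way --- $\e\to0$ must come first, with $\delta$ held fixed, so that the Khasminskii factor $\sqrt{\e/\delta}$ disappears, and only then $\delta\to0$ to absorb the $O(\delta)+O(\rho(\delta))$ freezing errors --- together with the careful handling of the time rescaling that converts the slab length $\delta$ in physical time into the interval $\delta/\e$ in Lemma~\ref{key}. It is precisely here that the two standing hypotheses enter essentially: the uniform $W^{1,2}$--in--time control of $u^\e$ for the $v$-- and $u$--freezing errors, and the equi-uniform continuity of $\phi^\e$ for the $\phi$--freezing errors.
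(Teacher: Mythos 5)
Your proposal is correct and follows essentially the same route as the paper: discretize time, freeze the slow variable and the fast SDE's drift on each slab, and apply Lemma \ref{key} after the time rescaling $t=\e\sigma$, with the freezing errors controlled by the uniform $W^{1,2}$-in-time bound on $u^\e$ and the equi-uniform continuity of $\phi^\e$. The only (harmless) differences are bookkeeping choices: you freeze $\phi^\e(t_k)$ so that $\Phi$ is time-independent and re-initialize the frozen process at $v^\e(t_k)$, and you take iterated limits ($\e\to0$ then $\delta\to0$), whereas the paper keeps $\phi^\e(\e s)$ inside the functional $F^\e$ (paying with the modulus-of-continuity term in Lemma \ref{key}), propagates a single auxiliary process $\wt v^\e$ across slabs, and couples the scales by choosing $\delta^\e=\sqrt{\e}$.
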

\begin{proof}
Fix $n^\e$ a positive integer and let $\delta^\e =\dfrac{T}{n^\e}$. We define $\wt{u}^\e$ as the piecewise constant function:
\begin{equation}\label{wtphie}
\wt{u}^\e(t)=u^\e(k\delta^\e) \ \mbox{ for } t\in[k\delta^\e, (k+1)\delta^\e). 
\end{equation}
We define also the sequence $\wt{v}{^\e}$ as the solution of:
\begin{equation} \label{wtve}
\left\{
\begin{array}{rll}
d \wt{v}^\e(t, x) &= -\dfrac{1}{\e} (\wt{v}^\e(t, x)-\wt{u}^\e(t, x))dt + \sqrt {\dfrac{Q}{\e}} dW(t, x) &\mbox{ in }\ [0, T] \times D, \\
\wt{v}^\e(0, x) & = v_0^\e(x)&\mbox{ in }\ D. 
\end{array}
\right. 
\end{equation}
A simple calculation shows that the sequence $u^\e$ is H\"{o}lder continuous, uniformly in $\e$ and $\omega$:
\begin{equation}\nonumber
\begin{split}
u^\e(t) - u^\e(s) &=\int_s^t \dfrac{\partial u^\e}{\partial t}(r) dr \Rightarrow\\
\|u^\e(t) - u^\e(s)\|_{L^2(D)^2}&\leq (t-s)^{\frac{1}{2}}\left(\int_0^T \left\| \dfrac{\partial u^\e}{\partial t}(r)\right\|^2_{L^{2}(D)} dr \right)^{\frac{1}{2}}\leq C(t-s)^{\frac{1}{2}}. 
\end{split}
\end{equation}
This implies that:
\begin{equation}\label{difue}
\lim_{\delta^\e \to 0} \| \wt{u}^\e - u^\e \|_{L^\infty(0, T; L^2(D)^2)} =0, 
\end{equation}
uniformly in $\e$ and $\omega$. 
From \eqref{ve} and \eqref{wtve} we get that $\wt{v}^\e(t) - v^\e(t) = \dfrac{1}{\e}\displaystyle\int_0^t e^{\frac{-(t-s)}{\e}} \left(\wt{u}^\e(s) - u^\e(s)\right)ds$, so we also have that
\begin{equation}\label{difve}
\lim_{\delta^\e \to 0} \| \wt{v}^\e - v^\e \|_{L^\infty(0, T; L^2(D)^2)} =0, 
\end{equation}
uniformly in $\e$ and $\omega$. 

Now
\begin{equation}\nonumber
\begin{split}
\int_0^T \int_D \left(\alpha^\e(v^\e(t)) - \o{\alpha}^\e (u^\e(t))\right) \phi^\e(t) dx dt -\int_0^T \int_D \left(\alpha^\e(\wt{v}^\e(t)) - \o{\alpha}^\e (\wt{u}^\e(t))\right) \phi^\e(t) dx dt=\\
\int_0^T \int_D \phi^\e(t)\left(\alpha^\e(v^\e(t))-\alpha^\e(\wt{v}^\e(t)) \right) dxdt + \int_0^T \int_D \phi^\e(t)\left(\o{\alpha}^\e(\wt{u}^\e(t))-\o{\alpha}^\e(u^\e(t)) \right) dxdt, \end{split}
\end{equation}
But:
\begin{equation}\nonumber
\begin{split}
&\int_0^T \int_D \phi^\e(t)\left(\alpha^\e(v^\e(t))-\alpha^\e(\wt{v}^\e(t)) \right) dxdt\leq\\
&\|\phi^\e\|_{L^\infty(\Omega; C([0, T]; L^2(D)^2))}\int_0^T \left(\int_D |\alpha^\e(v^\e(t)) - \alpha^\e (\wt{v}^\e(t)|^{2}dx \right)^{1/2}\leq\\
&\|\phi^\e\|_{L^\infty(\Omega; C([0, T]; L^2(D)^2))} \int_0^T \left(\int_D [\alpha]^2 \left|v^\e(t) - \wt{v}^\e(t)\right|^2 dx \right)^{1/2}\leq\\
&C\sqrt{T} \|\phi^\e\|_{L^\infty(\Omega; C([0, T]; L^2(D)^2))} [\alpha]\| \wt{v}^\e - v^\e \|_{L^\infty(0, T; L^2(D)^2)}, 
\end{split}
\end{equation}
and similarly 
\begin{equation}\nonumber
\begin{split}
&\int_0^T \int_D \phi^\e(t)\left(\o{\alpha}^\e(\wt{u}^\e(t))-\o{\alpha}^\e(u^\e(t)) \right) dxdt\leq\\
&C \sqrt{T}\|\phi^\e\|_{C([0, T]; L^\infty(\Omega;L^2(D)^2))} [\alpha] \| \wt{u}^\e - u^\e \|^{1/2}_{L^\infty(0, T; L^2(D)^2)}, 
\end{split}
\end{equation}
which will imply based on \eqref{difue} and \eqref{difve} that
\begin{equation}\label{difalpha}
\lim_{\delta^\e \to 0}\E\left| \int_0^T \int_D \left(\alpha^\e(v^\e(t)) - \o{\alpha}^\e (u^\e(t))\right) \phi^\e(t) dx dt -\int_0^T \int_D \left(\alpha^\e(\wt{v}^\e(t)) - \o{\alpha}^\e (\wt{u}^\e(t))\right) \phi^\e(t) dx dt\right| =0, 
\end{equation}
uniformly in $\e$. 

Let us study now the term $\displaystyle \int_0^T \int_D \left(\alpha^\e(\wt{v}^\e(t)) - \o{\alpha}^\e (\wt{u}^\e(t))\right) \phi^\e(t) dx dt$. 
\begin{equation}
\begin{split}\label{eq3}
\int_0^T \int_D \left(\alpha^\e(\wt{v}^\e(t)) - \o{\alpha}^\e (\wt{u}^\e(t))\right) \phi^\e(t) dx dt &=\sum_{k=0}^{n^\e-1} \int_{k \delta^\e}^{(k+1)\delta^\e} \int_D\left(\alpha^\e(\wt{v}^\e(t)) - \o{\alpha}^\e (\wt{u}^\e(t))\right) \phi^\e(t) dx dt. \end{split}
\end{equation}

The process defined by 
\begin{equation}\label{deffe}
F^\e(s, \eta)= \displaystyle\int_D \alpha^\e(\eta) \phi^\e \left(\e s\right)dx
\end{equation}
belongs to $C^u([0, T / \e ] ;Lip(L^2(D)^2))$, 
with 
$$| F^\e (s, 0) | \leq |\alpha| \|\phi^\e\|_{C([0, T]; L^2(D)^2)}, $$
$$ [F^\e(s, \cdot)] \leq [\alpha] \left\| \phi^\e\right\|_{C([0, T];L^2(D)^2)}, $$ 
so
$$ \| F^\e (s)\|_{Lip(L^2(D)^2)} \leq (|\alpha|+[\alpha]) \|\phi^\e\|_{C([0, T]; L^2(D)^2)}$$
and 
$$[F^\e] (r) \leq(|\alpha|+[\alpha]) [\phi^\e]_{C^u([0, T]; L^2(D)^2)}(\e r), $$ so we can apply Lemma \ref{key} on the interval $[k \delta_\e / \e, (k+1) \delta_\e / \e]$ for $\xi = u^\e(k\delta^\e)$ and $\eta=\wt{v}^\e(k \delta^\e)$ to the sequence $ F^\e$:

\begin{equation}\label{eq1}
\begin{split}
&\E \left(\left| \frac{\e}{\delta^\e} \int_{k\delta^\e / \e}^{(k+1)\delta^\e / \e} F^\e(s, v^{u^\e(k\delta^\e), \wt{v}^\e(k \delta^\e)}(s)) ds- \int_{L^2(D)^2} F^\e (s, z) d\mu^{u^\e(k\delta^\e)}(z)\right| \Big | \mathcal{F}_{k\delta^\e}\right)\leq \\
&c \left(1+\|\wt{v}^\e(k \delta^\e)\|_{L^2(D)^2}+\|u^\e(k\delta^\e)\|_{L^2(D)^2}\right)\left( \frac{\sqrt{\e}\|F^\e\| }{\sqrt{\delta_\e}} + \sqrt{\|F^\e\| [F^\e]( \delta_\e / \e)} \right)\leq \\
&C \left(1+\|\wt{v}^\e(k \delta^\e)\|_{L^2(D)^2}+\|u^\e(k\delta^\e)\|_{L^2(D)^2}\right)\left( \frac{\sqrt{\e}\|\phi^\e\|}{\sqrt{\delta_\e}} + \sqrt{\|\phi^\e\|\left[ \phi^\e \right](\delta_\e )} \right). \\
\end{split}
\end{equation}

But by a change of variables $\wt{v}^\e\left(\e t\right)$ is a solution for the equation \eqref{vr} on the interval $[k\delta^\e / \e, (k+1)\delta^\e / \e]$ with $\xi = u^\e(k \delta^\e)$ and $\eta = \wt{v}^\e(k \delta^\e)$, so 
$$ v^{u^\e(k\delta^\e), \wt{v}^\e(k \delta^\e)}(s) = \wt{v}^\e\left(\e s\right). $$
Also using formula \eqref{l1}:
\begin{equation}\nonumber
\begin{split}
\E\left(\| \wt{v}^\e((k+1)\delta^\e)\|^2_{L^2(D)^2}|\mathcal{F}_{k\delta^\e}\right) \leq &c\left(\| \wt{v}^\e(k\delta^\e)\|^2_{L^2(D)^2} e^{-2\delta^\e / \e} + \| u^\e(k\delta^\e)\|^2_{L^2(D)^2}+1\right)\Rightarrow\\
\| \wt{v}^\e((k+1)\delta^\e)\|^2_{L^2(\Omega, L^2(D)^2)}\leq & c \left(\| \wt{v}^\e(k\delta^\e)\|^2_{L^2(\Omega, L^2(D)^2)} e^{-2\delta^\e/\e} + \| u^\e\|^2_{L^2(\Omega, C([0, T];L^2(D)^2))}+1\right), 
\end{split}
\end{equation}
and we obtain by induction that:
\begin{equation}\nonumber
\| \wt{v}^\e(k\delta^\e)\|^2_{L^2(\Omega, L^2(D)^2)} \leq c^ke^{-2k\delta^\e/\e} \| \wt{v}^\e(0\|^2_{L^2(\Omega, L^2(D)^2)} +\left(\sum_{i=1}^k c^k e^{-2k\delta^\e/\e}\right) \left(\| u^\e\|^2_{L^2(\Omega, C([0, T];L^2(D)^2))}+1\right), 
\end{equation}
so for $\e/\delta^\e$ small enough we get the estimate:
\begin{equation}\label{estweve}
\| \wt{v}^\e(k\delta^\e)\|^2_{L^2(\Omega, L^2(D)^2)} \leq C \left(\| u^\e\|^2_{L^2(\Omega, C([0, T];L^2(D)^2))}+1\right)\ ,\ \forall k >0. 
\end{equation}
The equation \eqref{eq1} now becomes:
\begin{equation}\label{eq2}
\begin{split}
&\E \left| \frac{\e}{\delta^\e} \int_{k\delta^\e / \e}^{(k+1)\delta^\e / \e} F^\e\left(s, \wt{v}^\e\left(\e s\right)\right) ds- \int_{L^2(D)^2}F^\e \left(s, z\right) d\mu^{u^\e(k\delta^\e)}(z)\right| =\\
&\E \left| \frac{1}{\delta^\e} \int_{k\delta^\e}^{(k+1)\delta^\e} F^\e(\frac{ s}{\e}, \wt{v}^\e\left(s\right)) ds- \int_{L^2(D)^2}F^\e (\frac{ s}{\e}, z) d\mu^{u^\e(k\delta^\e)}(z)\right|\leq \\
&C \left(1+\| u^\e\|^2_{L^2(\Omega, C([0, T];L^2(D)^2))}\right)\left( \frac{\sqrt{\e}\|\phi^\e\|}{\sqrt{\delta_\e}} + \sqrt{\|\phi^\e\|\left[\phi^\e \right](\delta_\e )} \right). 
\end{split}
\end{equation}
If we sum over all $0\leq k \leq n^\e-1$ and go back to the equation \eqref{eq3} we obtain that
\begin{equation}
\begin{split} 
&\E \left| \int_0^T \int_D \left(\alpha^\e(\wt{v}^\e(t)) - \o{\alpha}^\e (\wt{u}^\e(t))\right) \phi^\e(t) dx dt\right| \\
\leq & C \left(1+\|u^\e\|_{C([0, T]; L^\infty(\Omega, L^2(D)^2))}\right)\left( \frac{\sqrt{\e}\|\phi^\e\| }{\sqrt{\delta_\e}} + \sqrt{\|\phi^\e\|\left[ \phi^\e\right](\delta_\e )} \right). 
\end{split} 
\end{equation}
If we choose now $n^\e = T/ \sqrt{\e}$ use the equiuniform continuity of $\phi^\e$ and the convergences given by \eqref{difalpha} we obtain that 
$$\lim_{\e\to 0}\E \left| \int_0^T \int_D \left(\alpha^\e(v^\e(t)) - \o{\alpha}^\e (u^\e(t))\right) \phi^\e(t) dx dt\right| =0, $$
which proves the Lemma. 
\end{proof}
The convergence to $0$ of $S_1^\e$ follows:
\begin{lemma}\label{lemmaconv1}
If $\phi^\e$ is a sequence uniformly bounded in $H_0^1(D)$ and $\psi\in C[0, T]$ then:
\begin{equation}
\label{convre}
\begin{split}
\lim_{\e\to 0} \E\left|  \int_0^T \int_D \left( \alpha^\e(v^\e_1, v^\e_2) (u_2^\e-u_1^\e) - \o{\alpha^\e}(\beta_{11}u^\e_1 + \beta_{12}u^\e_2, \beta_{21} u^\e_1 + \beta_{22}u^\e_2) (u_2^\e-u_1^\e) \right)\phi^\e \psi dx dt \right| =& 0. 
\end{split}
\end{equation}
\begin{proof}
As $u^\e_1, \ u^\e_2$ are uniformly bounded in $L^\infty(\Omega, C([0, T]; H_0^1(D))) \cap L^\infty(\Omega, W^{1, 2}(0, T; L^2(D)))$ and $\Psi\in C[0, T]$, then the sequences $(u^\e_2-u^\e_1) \phi^\e \psi $ is uniformly bounded and equiuniformly continuous in $C([0, T] ; L^\infty(\Omega;L^2(D)))$. We then apply thes Lemma for $v^\e=(v^\e_1,v^\e_2)$ and $u^\e=(\beta_{11}u^\e_1 + \beta_{12}u^\e_2, \beta_{21} u^\e_1 + \beta_{22}u^\e_2)$.
\end{proof}
\end{lemma}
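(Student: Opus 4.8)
\subsection*{Proof proposal for Lemma \ref{lemmaconv1}}

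The plan is to recognize \eqref{convre} as a special case of Lemma \ref{converg} and simply check that its hypotheses hold. First I would set $v^\e=(v_1^\e,v_2^\e)$ and observe that, by the third and fourth lines of \eqref{system1}, this pair solves exactly equation \eqref{ve} with frozen parameter $u^\e=(\beta_{11}u_1^\e+\beta_{12}u_2^\e,\ \beta_{21}u_1^\e+\beta_{22}u_2^\e)$, block--diagonal trace--class operator $Q=\mathrm{diag}(Q_1,Q_2)$ on $L^2(D)^2$, $L^2(D)^2$--valued Brownian motion $W=(W_1,W_2)$, and initial datum $v_0^\e=(v_{01}^\e,v_{02}^\e)$, which is uniformly bounded in $L^2(D)^2$ by assumption. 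With this identification $\o{\alpha^\e}(\beta_{11}u_1^\e+\beta_{12}u_2^\e,\beta_{21}u_1^\e+\beta_{22}u_2^\e)=\o{\alpha}^\e(u^\e)$ and $\alpha^\e(v_1^\e,v_2^\e)=\alpha^\e(v^\e)$, so the integrand in \eqref{convre} is precisely $\bigl(\alpha^\e(v^\e)-\o{\alpha}^\e(u^\e)\bigr)$ tested against the scalar sequence $g^\e:=(u_2^\e-u_1^\e)\phi^\e\psi$.

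Next I would verify the two structural hypotheses of Lemma \ref{converg}. Since the $\beta_{ij}$ are constants, the uniform bounds \eqref{est2'}--\eqref{est3'} of Theorem \ref{threg} immediately give that $u^\e$ is $\mathcal{F}_t$--measurable and uniformly bounded in $L^\infty(\Omega;W^{1,2}(0,T;L^2(D)^2))$. It then remains to show that $g^\e$ is $\mathcal{F}_t$--measurable, uniformly bounded and equiuniformly continuous in $C([0,T];L^2(D))$, uniformly in $\e$ and $\omega$. Measurability and uniform boundedness follow from the uniform $H_0^1(D)$ bounds on $u_i^\e$ (Theorem \ref{threg}) and on $\phi^\e$, the Sobolev embedding $H_0^1(D)\hookrightarrow L^6(D)$ in dimension three and H\"older's inequality ($L^6\cdot L^6\subset L^3\subset L^2$ on the bounded domain $D$), together with $\psi\in C[0,T]$. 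For the equicontinuity I would argue, as in the H\"older estimate at the start of the proof of Lemma \ref{converg}, that \eqref{est3'} yields $\|u_i^\e(t)-u_i^\e(s)\|_{L^2(D)}\leq C|t-s|^{1/2}$ uniformly in $\e,\omega$; interpolating between this and the uniform $H_0^1$ bound ($\|w\|_{L^3}\leq C\|w\|_{L^2}^{1/2}\|w\|_{H^1}^{1/2}$, $\|w\|_{L^6}\leq C\|w\|_{H^1}$) gives a modulus of continuity for $u_2^\e-u_1^\e$ in $L^3(D)$ of order $|t-s|^{1/4}$, which after multiplication by $\|\phi^\e\|_{L^6(D)}\leq C$ and by the (uniform) modulus of continuity of $\psi$ on $[0,T]$ produces a modulus of continuity for $g^\e$ independent of $\e$ and $\omega$.

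With both hypotheses checked, Lemma \ref{converg} applies directly and gives $\lim_{\e\to 0}\E\bigl|\int_0^T\int_D(\alpha^\e(v^\e(t))-\o{\alpha}^\e(u^\e(t)))g^\e(t)\,dx\,dt\bigr|=0$, which is exactly \eqref{convre}. I expect the only genuine work to be the verification of the equiuniform continuity of $g^\e$ in $C([0,T];L^2(D))$ uniformly in $\e$ and $\omega$: this is where the regularity gain of Theorem \ref{threg} (the uniform continuity in $H_0^1$ and the uniform $L^2$--bound on $\partial_t u_i^\e$), the three--dimensional Sobolev and interpolation inequalities, and the compactness of $[0,T]$ all need to be combined. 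Everything else is a bookkeeping identification of terms.
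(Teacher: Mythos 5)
Your proposal follows exactly the route of the paper's own proof: identify \eqref{convre} as an instance of Lemma \ref{converg} with $v^\e=(v_1^\e,v_2^\e)$, $u^\e=(\beta_{11}u_1^\e+\beta_{12}u_2^\e,\,\beta_{21}u_1^\e+\beta_{22}u_2^\e)$, and test sequence $(u_2^\e-u_1^\e)\phi^\e\psi$, whose uniform boundedness and equiuniform continuity in $C([0,T];L^2(D))$ the paper merely asserts from Theorem \ref{threg}. Your verification of that assertion via the Sobolev embedding $H_0^1(D)\hookrightarrow L^6(D)$, H\"older's inequality, and interpolation between the $L^2$-H\"older continuity in time and the uniform $H_0^1$ bound is correct and simply supplies the details the paper omits.
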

\subsubsection{Convergence of $S^\e_2$}\label{subs43}
\begin{lemma}\label{lemmaconv1''}
Assume $u_1^\e, \ u_2^\e$ are two sequences uniformly bounded in $L^\infty(\Omega, C([0, T], H_0^1(D)))$ that converge in distribution to $\o{u}_2, \ \o{u}_1$ in $C([0, T], L^2(D)))$. Then, for any sequence $\phi^\e$ uniformly bounded in $H_0^1(D)$ and $\psi\in C[0, T]$ we have:
\begin{equation}\label{convre''}
\lim_{\e\to 0}\E \left|\int_0^T \int_D \left( \o{\alpha^\e}(\beta_{11}u^\e_1 + \beta_{12}u^\e_2, \beta_{21} u^\e_1 + \beta_{22}u^\e_2) (u_2^\e-u_1^\e)- \o{\alpha^\e}(\beta_{11}\o{u}_1 + \beta_{12}\o{u}_2, \beta_{21}\o{u}_1 + \beta_{22}\o{u}_2) (\o{u}_2-\o{u}_1) \right)\phi^\e \psi dx dt \right| =0. 
\end{equation}
\end{lemma}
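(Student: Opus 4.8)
The plan is to reduce the whole statement to two elementary properties of the averaged operator $\o{\alpha^\e}$: that it is bounded by $\|\alpha\|_\infty$ in $L^\infty(D)$ and Lipschitz from $L^2(D)^2$ to $L^2(D)$ with constant $[\alpha]$, both \emph{uniformly in} $\e$. Boundedness is immediate from the pointwise formula $\o{\alpha^\e}(\xi)(x)=\int_{L^2(D)^2}\alpha(x/\e,\eta_1(x),\eta_2(x))\,d\mu^\xi(\eta)$ and $|\alpha|\le\|\alpha\|_\infty$. For the Lipschitz bound I would write $\mu^\xi$ as the law of $\xi+Z$ with $Z\sim\mu^0$ a centered Gaussian, so that $\o{\alpha^\e}(\xi)-\o{\alpha^\e}(\xi')=\int\big(\alpha^\e(\xi+z)-\alpha^\e(\xi'+z)\big)\,d\mu^0(z)$; since $\alpha^\e:L^2(D)^2\to L^2(D)$ is $[\alpha]$-Lipschitz and the Gaussian shift cancels, Jensen's inequality yields $\|\o{\alpha^\e}(\xi)-\o{\alpha^\e}(\xi')\|_{L^2(D)}\le[\alpha]\|\xi-\xi'\|_{L^2(D)^2}$, uniformly in $\e$.

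Next I would split the integrand. With $U^\e=(\beta_{11}u_1^\e+\beta_{12}u_2^\e,\beta_{21}u_1^\e+\beta_{22}u_2^\e)$ and $\o U=(\beta_{11}\o{u}_1+\beta_{12}\o{u}_2,\beta_{21}\o{u}_1+\beta_{22}\o{u}_2)$,
\begin{equation}\nonumber
\begin{split}
&\o{\alpha^\e}(U^\e)(u_2^\e-u_1^\e)-\o{\alpha^\e}(\o U)(\o{u}_2-\o{u}_1)\\
&\quad=\o{\alpha^\e}(U^\e)\big[(u_2^\e-u_1^\e)-(\o{u}_2-\o{u}_1)\big]+\big[\o{\alpha^\e}(U^\e)-\o{\alpha^\e}(\o U)\big](\o{u}_2-\o{u}_1).
\end{split}
\end{equation}
For the first term I bound $|\o{\alpha^\e}(U^\e)|\le\|\alpha\|_\infty$ pointwise and use Hölder in $x$ (with $\phi^\e$ bounded in $L^2(D)$), so its $(t,x)$-integral is at most $C\|\psi\|_\infty\|\phi^\e\|_{L^2}\,T\sup_{t}\big(\|u_1^\e(t)-\o{u}_1(t)\|_{L^2}+\|u_2^\e(t)-\o{u}_2(t)\|_{L^2}\big)$. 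For the second term I use the uniform Lipschitz bound, $\|\o{\alpha^\e}(U^\e(t))-\o{\alpha^\e}(\o U(t))\|_{L^2}\le C_\beta[\alpha]\big(\|u_1^\e(t)-\o{u}_1(t)\|_{L^2}+\|u_2^\e(t)-\o{u}_2(t)\|_{L^2}\big)$, together with the Sobolev embedding $H_0^1(D)\hookrightarrow L^6(D)$ (valid since $D\subset\R^3$) to control $(\o{u}_2-\o{u}_1)\phi^\e$ in $L^2((0,T)\times D)$; Cauchy--Schwarz in $t$ then bounds its $(t,x)$-integral by $C\|\psi\|_\infty\|\phi^\e\|_{H_0^1}\,\|\o{u}_2-\o{u}_1\|_{L^2(0,T;H_0^1)}\,\sqrt{T}\,\sup_t\big(\|u_1^\e-\o{u}_1\|_{L^2}+\|u_2^\e-\o{u}_2\|_{L^2}\big)$.

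In both estimates the right-hand side is a product of factors that stay bounded in $L^\infty(\Omega)$ — from the uniform $L^\infty(\Omega;C([0,T];H_0^1(D)))$ bounds on $u_i^\e$, the same bound inherited by the limits $\o{u}_i$ by weak lower semicontinuity, the uniform $H_0^1$-bound on $\phi^\e$, and $\o{u}_i\in L^\infty(\Omega;L^2(0,T;H_0^1(D)))$ — times the single factor $\sup_t(\|u_1^\e-\o{u}_1\|_{L^2}+\|u_2^\e-\o{u}_2\|_{L^2})$. By the Skorokhod representation theorem I may assume this last factor tends to $0$ almost surely without affecting the expectations in question, and since it remains bounded in $L^\infty(\Omega)$, dominated convergence gives the claimed limit. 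The main obstacle is not the (routine) estimation but turning the hypothesis ``convergence in distribution in $C([0,T];L^2(D))$'' into a usable statement: Skorokhod's theorem provides almost sure convergence, and the uniform-in-$\omega$ bounds are precisely what is needed to upgrade it to convergence in $L^1(\Omega)$; a secondary technical point is the bookkeeping of integrability exponents, the cubic product $(\o{u}_2-\o{u}_1)\phi^\e\psi$ being made sense of through $H_0^1\hookrightarrow L^6$ in dimension three.
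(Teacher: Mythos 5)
Your proof is correct and follows essentially the same route as the paper's: the identical two-term decomposition, the uniform boundedness and uniform Lipschitz continuity of $\o{\alpha^\e}$ from $L^2(D)^2$ to $L^2(D)$, a Sobolev embedding to handle the triple product with $\phi^\e$, and the almost-sure convergence (via Skorokhod) together with the uniform $L^\infty(\Omega)$ bounds to pass to the limit in expectation. The only substantive addition is your explicit verification of the uniform Lipschitz bound for $\o{\alpha^\e}$ through the Gaussian shift structure of $\mu^\xi$, which the paper uses but does not prove.
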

\begin{proof}
We compute:
\begin{equation}\begin{split}
&\o{\alpha^\e}(\beta_{11}u^\e_1 + \beta_{12}u^\e_2, \beta_{21} u^\e_1 + \beta_{22}u^\e_2) (u_2^\e-u_1^\e)- \o{\alpha^\e}(\beta_{11}\o{u}_1 + \beta_{12}\o{u}_2, \beta_{21}\o{u}_1 + \beta_{22}\o{u}_2) (\o{u}_2-\o{u}_1)=\\
&\o{\alpha^\e}(\beta_{11}u^\e_1 + \beta_{12}u^\e_2, \beta_{21} u^\e_1 + \beta_{22}u^\e_2) (u_2^\e-u_1^\e - \o{u}_2+\o{u}_1)\\
+ &(\o{u}_2-\o{u}_1)(\o{\alpha^\e}(\beta_{11}u^\e_1 + \beta_{12}u^\e_2, \beta_{21} u^\e_1 + \beta_{22}u^\e_2) - \o{\alpha^\e}(\beta_{11}\o{u}_1 + \beta_{12}\o{u}_2, \beta_{21}\o{u}_1 + \beta_{22}\o{u}_2) ), 
\end{split}
\end{equation}
so
\begin{equation}\begin{split}
&\E \left| S^\e_2 \right|\leq C\E \int_0^T\|u_2^\e(t) - \o{u}_2(t) \|_{L^2(D)} + \|u_1^\e(t) - \o{u}_1(t) \|_{L^2(D)} dt, 
\end{split}
\end{equation}
based on the uniform Lipschitz condition of $\o{\alpha^{\e}}$ and the imbedding of $H_0^1(D)$ into $L^2(D)$. The uniform bounds for $u_1^\e, \ u_2^\e$ now give \eqref{convre''}. 
\end{proof}

\subsubsection{Convergence of $S^\e_3$}\label{subs44}
\label{subs44}
\begin{lemma}
\label{lemmaconv1'}
For $\o{u}_1, \o{u}_2 \in L^\infty (\Omega; C([0, T]; L^2(D)))$, $\phi^\e\in H_0^1(D)$ uniformly bounded and $\psi\in C[0, T]$ we define by $S^\e_3$ the integral $$\int_0^T \int_D \left(\o{\alpha^\e}(\beta_{11}\o{u}_1 + \beta_{12}\o{u}_2, \beta_{21}\o{u}_1 + \beta_{22}\o{u}_2)  - \o{\alpha}(\beta_{11}\o{u}_1 + \beta_{12}\o{u}_2, \beta_{21}\o{u}_1 + \beta_{22}\o{u}_2)) (\o{u}_2-\o{u}_1\right)\phi^\e \psi dxdt.$$ Then:
\begin{equation}
\label{convre'}
\begin{split}
\lim_{\e\to 0} \E\left| S^\e_3\right| =& 0. 
\end{split}
\end{equation}
\end{lemma}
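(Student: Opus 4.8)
The plan is to treat $S^\e_3$ as a pure periodic‑homogenization term: the coefficient $\o{\alpha^\e}$ differs from its homogenized counterpart $\o\alpha$ only through the rapid oscillation $x\mapsto x/\e$, and $\o\alpha$ is exactly its $Y$-average, so $\o{\alpha^\e}(\,\cdot\,)-\o\alpha(\,\cdot\,)$ is asymptotically negligible when tested against any fixed $L^1$ function. Write $\xi(t)=(\beta_{11}\o u_1(t)+\beta_{12}\o u_2(t),\beta_{21}\o u_1(t)+\beta_{22}\o u_2(t))\in L^2(D)^2$ (continuous in $t$ for a.e.\ $\omega$, since $\o u_1,\o u_2\in C([0,T];L^2(D))$), set $w^\e=(\o u_2-\o u_1)\phi^\e\psi$, and put $\gamma(y,a_1,a_2)=\alpha(y,a_1,a_2)-\int_Y\alpha(y',a_1,a_2)\,dy'$, so that $\int_Y\gamma(y,a_1,a_2)\,dy=0$ for every $(a_1,a_2)$, $|\gamma|\le 2|\alpha|$, and $\gamma(y,\cdot,\cdot)$ is Lipschitz with constant at most $2[\alpha]$ uniformly in $y$ (here $|\alpha|$, $[\alpha]$ denote the uniform bound and Lipschitz constant of $\alpha$, as in the proof of Lemma \ref{converg}). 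First I would use the definitions \eqref{defoalphae}--\eqref{defoalpha} of the averaged operators as Bochner integrals (cf.\ \cite{Yosida}) together with the joint measurability of $(x,z)\mapsto\alpha^\e(z)(x)$ on $D\times L^2(D)^2$ (a consequence of the Pettis measurability of $\alpha^\e$ recorded after \eqref{defoalpha}) to obtain, for a.e.\ $x\in D$,
\[
\o{\alpha^\e}(\xi(t))(x)-\o{\alpha}(\xi(t))(x)=\int_{L^2(D)^2}\gamma\Big(\tfrac x\e,z_1(x),z_2(x)\Big)\,d\mu^{\xi(t)}(z),
\]
and then, by Fubini (legitimate because $\big|\int_D\gamma(x/\e,z_1(x),z_2(x))\,w^\e(t,x)\,dx\big|\le 2|\alpha|\,\|w^\e(t)\|_{L^1(D)}$ and $t\mapsto\mu^{\xi(t)}$ is continuous, hence measurable, $\mu^\xi$ being a translate of a fixed Gaussian), to rewrite $S^\e_3=\int_0^T\int_{L^2(D)^2}I^\e(t,z)\,d\mu^{\xi(t)}(z)\,dt$ with $I^\e(t,z)=\int_D\gamma(x/\e,z_1(x),z_2(x))\,w^\e(t,x)\,dx$.

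To obtain $S^\e_3\to0$ for a.e.\ $\omega$ it suffices, by the subsequence principle, to show that along any sequence $\e\downarrow0$ for which $\phi^\e\to\phi$ strongly in $L^2(D)$ — such a subsequence can be extracted from any given one by compactness of $H^1_0(D)\hookrightarrow L^2(D)$ — one has $S^\e_3\to0$. Fix such a sequence; the key claim is that $I^\e(t,z)\to0$ for every $t\in[0,T]$ and every $z\in L^2(D)^2$. Writing $\phi^\e=\phi+(\phi^\e-\phi)$, the part of $I^\e(t,z)$ carrying $\phi^\e-\phi$ is at most $2|\alpha|\,\|\psi\|_\infty\|\o u_2(t)-\o u_1(t)\|_{L^2(D)}\|\phi^\e-\phi\|_{L^2(D)}\to0$, while the part carrying $\phi$ equals $\int_D\gamma(x/\e,z_1(x),z_2(x))\,g(t,x)\,dx$ with the fixed $g(t,\cdot)=(\o u_2(t)-\o u_1(t))\phi\,\psi(t)\in L^1(D)$. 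This last term tends to $0$ by Lemma 1.3 of \cite{Allaire}: the periodic oscillation $x\mapsto\gamma(x/\e,z_1(x),z_2(x))$ converges weakly-$*$ in $L^\infty(D)$ to its $Y$-mean $\int_Y\gamma(y,z_1(\cdot),z_2(\cdot))\,dy=0$. Since $\alpha$, hence $\gamma$, is only measurable in $y$, I would deduce this from the version of that lemma for integrands continuous in $y$ by approximating $x\mapsto(z_1(x),z_2(x))$ by simple functions, applying the classical weak-$*$ convergence of periodic oscillations on each level set, and controlling the error via $|\gamma|\le2|\alpha|$, $[\gamma(y,\cdot,\cdot)]\le2[\alpha]$ and dominated convergence. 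This homogenization step — and especially keeping the estimates uniform over the unbounded range of the frozen arguments $(z_1,z_2)$ — is the part I expect to require the most care; the rest is routine.

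It then remains to let $\e\to0$ in $S^\e_3=\int_0^T\int_{L^2(D)^2}I^\e(t,z)\,d\mu^{\xi(t)}(z)\,dt$. The domination $|I^\e(t,z)|\le 2|\alpha|\,\|\psi\|_\infty\big(\sup_\e\|\phi^\e\|_{L^2(D)}\big)\|\o u_2(t)-\o u_1(t)\|_{L^2(D)}$ holds, is independent of $z$, and is bounded on $[0,T]$, hence integrable with respect to $dt\otimes\mu^{\xi(t)}(dz)$; dominated convergence gives $S^\e_3\to0$ along the fixed sequence, hence — by the subsequence principle — along the full sequence, a.s. Finally, $|S^\e_3|$ is bounded by a deterministic constant uniformly in $\e$ and $\omega$ (using $\o u_1,\o u_2\in L^\infty(\Omega;C([0,T];L^2(D)))$, $\psi\in C[0,T]$, and the uniform $H^1_0$-bound on $\phi^\e$), so a last application of dominated convergence over $\Omega$ yields $\E|S^\e_3|\to0$, i.e.\ \eqref{convre'}.
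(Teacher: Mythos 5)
Your proposal is correct and follows essentially the same route as the paper's proof: both isolate the zero-mean periodic oscillation $\alpha(x/\e,z(x))-\int_Y\alpha(y,z(x))\,dy$ for a frozen argument $z$, kill it via Lemma 1.3 of \cite{Allaire} after approximating $z$ by nicer functions (with the same need for care, which you flag, in controlling the Lipschitz error over the unbounded range of $z$), integrate over $z$ against $\mu^{\xi(t)}$ by dominated convergence (the paper invokes Vitali), and conclude with bounded convergence over $\Omega\times[0,T]$. The only organizational difference is that you scalarize via Fubini and explicitly extract a strongly $L^2$-convergent subsequence of $\phi^\e$ by Rellich to justify the weak--strong pairing, a point the paper treats more tersely by asserting $L^2(D)$-convergence of the factor $(\o{\alpha^\e}-\o{\alpha})(\xi)(\o{u}_2-\o{u}_1)$ and pairing it with the merely bounded $\phi^\e$; your handling of that pairing is, if anything, the more careful of the two.
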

\begin{proof}
For fixed $t\in[0, T]$ and $\omega\in\Omega$ we consider the sequence of functions $F^\e_t: L^2(D)^2 \to L^2(D)$,
$$F^\e_t(z) (x) = \left(\alpha \left(\dfrac{x}{\e}, z(x)\right) - \int_Y\alpha \left(y, z(x)\right) \right) (\o{u}_2(t, x) - \o{u}_1(t,x)). $$
We show now that $F^\e_t (z)$ converges in $L^2(D)$ to $0$, for every $z\in L^2(D)^2$. Let $z_n$ and $w_n$ two sequences of continuous functions converging in $L^2(D)^2$ to $z$ and in $L^2(D)$ to $\o{u}_2-\o{u}_1$. We use Lemma 1. 3 from \cite{Allaire} to get that $( F^\e_t)_n(x) = \left(\alpha \left(\dfrac{x}{\e}, z_n(x)\right) - \displaystyle\int_Y\alpha \left(y, z_n(x)\right) \right) w_n(x)$ converges to $0$ in $L^2(D)$. 

But $$\left|( F^\e_t)_n(x) - F^\e_t(z)(x) \right| \leq c |w_n(x)- \o{u}_2(t,x)+\o{u}_1)(t,x)| + c|z_n(x)-z(x)|, $$
based on the Lipschitz condition and boundedness for $\alpha$ so we deduce that $F^\e_t (z)$ converges in $L^2(D)$ to $0$. The sequence is uniformly bounded by $C\|  \o{u}_1 \|_{L^\infty (\Omega; C([0, T]; L^2(D)))}+C\| \o{u}_2 \|_{L^\infty (\Omega; C([0, T]; L^2(D)))}$, Vitali's convergence theorem implies that the sequence of the integrals with respect to the probability measure on $L^2(D)^2$, $\mu=\mu^{(\beta_{11}\o{u}_1(t) + \beta_{12}\o{u}_2(t), \beta_{21}\o{u}_1(t) + \beta_{22}\o{u}_2(t))}$ also converges to $0$ in $L^2(D)$:
$$\lim_{\e\to 0}\int_{L^2(D)^2} F^\e_t (z) d\mu(z) = 0 \ in \ L^2(D), $$
which can be rewritten as
$$\lim_{\e\to 0}\left(\o{\alpha^\e}(\beta_{11}\o{u}_1 + \beta_{12}\o{u}_2, \beta_{21}\o{u}_1 + \beta_{22}\o{u}_2) - \o{\alpha}(\beta_{11}\o{u}_1 + \beta_{12}\o{u}_2, \beta_{21}\o{u}_1 + \beta_{22}\o{u}_2)\right) (\o{u}_2-\o{u}_1)= 0 \ in \ L^2(D). $$
This implies that $\mathbb{P}$ a. s. and for every $t\in[0, T]$
$$\lim_{\e\to 0} \int_D \left(\o{\alpha^\e}(\beta_{11}\o{u}_1 + \beta_{12}\o{u}_2, \beta_{21}\o{u}_1 + \beta_{22}\o{u}_2)  - \o{\alpha}(\beta_{11}\o{u}_1 + \beta_{12}\o{u}_2, \beta_{21}\o{u}_1 + \beta_{22}\o{u}_2)) (\o{u}_2-\o{u}_1\right)\phi^\e \psi dx = 0, $$
with the sequence being also uniformly bounded. We apply the bounded convergence theorem and integrate over $\Omega \times [0, T]$ to get the result.
\end{proof}

Now, we are able to pass to the limit on the remaining terms of the variational equation \eqref{weaksole1}. The uniform bounds \eqref{est2'} and \eqref{est3'} hold for $u^\e_1$ and $u^\e_2$. So the sequences are a. s. $\omega\in\Omega$ contained in a compact set $\mathcal{K}$ of $w\mbox{-}L^2(0, T;H_0^1(D))$ so they are tight in $w\mbox{-}L^2(0, T;H_0^1(D))\cap C([0, T];L^2(D)) $. Then, there exist subsequences $u^{\e'}_1, \ u^{\e'}_2$ and random elements $\o{u}_1, \ \o{u}_2 \in L^2(0, T;H_0^1(D))\cap C([0, T];L^2(D))$ such that $u^{\e'}_1,\ u^{\e'}_2$ converge in distribution to $\o{u}_1,\ \o{u}_2$ in $w\mbox{-}L^2(0, T;H_0^1(D))\cap C([0, T];L^2(D))$. Skorokhod theorem gives us the existence of subsequences $u^{\e''}_1, \ u^{\e''}_2$ and  $\wt{u^{\e''}_1}, \ \wt{u^{\e''}_2}$ with the same distribution as $u^{\e''}_1,\ u^{\e''}_2$ defined on another probability space $\wt{\Omega}$ that converges pointwise to $\wt{\o{u}}_1, \ \wt{\o{u}}_2$ with the same distribution as $\o{u}_1, \ \o{u}_2$. It follows from here that $\wt{\o{u}}_1,\ \wt{\o{u}}_2 \in \mathcal{K}$ a. s. so $\wt{\o{u}_1},\ \wt{\o{u}_2}\in L^\infty(\wt{\Omega}, L^2(0, T;H_0^1(D))))$ and $\o{u}_1, \ \o{u}_2 \in L^\infty(\Omega, L^2(0, T;H_0^1(D)))$. 

In the variational formulation \eqref{weaksole1} for $u^{\e''}_1$ we use a test function $\phi^{\e''}=\phi + \e'' \nabla \phi \cdot \chi^{*\e''}_1$ where $\phi \in C_0^\infty(D)$, multiply it with $\psi'$ where $\psi \in C_0^1(0, T)$ to get:
\begin{equation}
\label{weaksole'}
\begin{split}
&\int_D u_1^\e(t) \phi^{\e''}\psi' dx - \int_D u^\e_{01} \phi^{\e''}\psi' dx + \int_0^t \int_D A_1^\e\nabla u_1^\e (s) \nabla \phi^{\e''}\psi' dx ds =\\
&\int_0^t \int_D \alpha^\e(v_1^\e(s), v_2^\e(s)) (u_2^\e(s)-u_1^\e(s)) \phi^{\e''}\psi' dx ds + \int_0^t \int_D f_1(s) \phi^{\e''}\psi' dx ds,
\end{split}
\end{equation}
We notice that
\begin{equation}
\label{convre'''}
\lim_{\e''\to 0} \E\left|\displaystyle\int_0^T \int_D \left(\alpha^\e(v_1^\e(t), v_2^\e(t)) (u_2^\e(t)-u_1^\e(t))   - \o{\alpha}(\beta_{11}\o{u}_1(t)+\beta_{12}\o{u}_2(t),\beta_{21}\o{u}_1(t)+\beta_{22}\o{u}_2(t)\right)\phi^{\e''} \psi(t)dx dt \right| = 0,
\end{equation}
rewriting the integral as $ S^{\e''}_1 + S^{\e''}_2 +S^{\e''}_3$ and using the convergences given by  
Lemmas \ref{lemmaconv1}, \ref{lemmaconv1''} and \ref{lemmaconv1'}. We obtain that:
\begin{equation}\label{eq6}
\begin{split}
\lim_{\e''\to 0} \E &\left|\int_0^T \int_D u^{\e''}_1(t) \phi^{\e''} \psi'(t) dx dt - \int_0^T\int_D u_{01} \phi^{\e''} \psi'(t) dx dt - \int_0^T \int_D A^{\e''}_1 \nabla u^{\e''}_1(t)\nabla\phi^{\e''}\psi(t)dx dt+\right. \\
&\left. \int_0^T \int_D\o{\alpha}(\beta_{11}\o{u}_1(t)+\beta_{12}\o{u}_2(t),\beta_{21}\o{u}_1(t)+\beta_{22}\o{u}_2(t))(\o{u}_2(t)-\o{u}_1(t))\phi^{\e''} \psi(t)dx dt+ \int_0^T \int_D f_1(t)\phi^{\e''}\psi(t) dxdt\right|=\\
\lim_{\e''\to 0} \wt\E &\left|\int_0^T \int_D \wt{u^{\e''}_1}(t) \phi^{\e''} \psi'(t) dx dt - \int_0^T\int_D u_0 \phi \psi'(t) dx dt - \int_0^T \int_D A^{\e''}_1 \nabla \wt{u^{\e''}_1}(t)\nabla\phi^{\e''}\psi(t)dx dt+\right. \\
&\left. \int_0^T \int_D\o{\alpha}(\beta_{11}\o{u}_1(t)+\beta_{12}\o{u}_2(t),\beta_{21}\o{u}_1(t)+\beta_{22}\o{u}_2(t))(\o{u}_2(t)-\o{u}_1(t))\phi\psi(t)dx dt+ \int_0^T \int_D f_1(t)\phi\psi(t) dxdt\right|=0. 
\end{split}
\end{equation}
We make now several calculations under the integral in the above equation and then pass to the limit pointswise in $\wt{\omega} \in \wt{\Omega}$:
\begin{equation*}
\begin{split}
& \int_0^T \int_D A^{\e''}_1 \nabla \wt{u^{\e''}_1}\nabla\left(\phi +\e''\nabla\phi\cdot \chi^{*\e''}_1 \right)\psi(t)dxdt= \\
& \int_0^T \int_D A^{\e''}_1 \nabla \wt{u^{\e''}_1}\left(\nabla\phi +\e''\nabla\nabla\phi \chi^{*\e''}_1 +\e'' \nabla\phi \nabla\chi^{\e''}_1\right)\psi(t)dxdt= \\
& \int_0^T \int_D A^{\e''}_1 \nabla \wt{u^{\e''}_1}\nabla\phi\psi(t) +\e''A^{\e''}_1 \nabla \wt{u^{\e''}_1}\nabla\nabla\phi \chi^{*\e''}_1\psi(t) +\e'' A^{\e''}_1 \nabla \wt{u^{\e''}_1}\nabla\phi \nabla\chi^{\e''*}_1\psi(t)dxdt= \\
& \int_0^T \int_D A^{\e''}_1 \nabla \wt{u^{\e''}_1}\nabla\phi\psi(t) +\e''A^{\e''}_1 \nabla \wt{u^{\e''}_1}\nabla\nabla\phi \chi^{*\e''}_1\psi(t) +\e'' A^{\e''}\nabla\chi^{\e''}_1 \nabla \wt{u^{\e''}_1}\nabla\phi\psi(t)dxdt. 
\end{split}
\end{equation*}
From the equation \eqref{cellpre1} satisfied by $\chi^{\e''}_1$ we have that
\begin{equation*}
\begin{split}
\int_D A^{\e''}_1\left( I+ \e'' \nabla \chi^{\e''}_1 \right) \nabla \left(\wt{u^{\e''}_1} \nabla \phi \right) dx&=0\Rightarrow\\
\int_D \left(A^{\e''}_1 \nabla \wt{u^{\e''}_1} \nabla \phi +\e'' A^{\e''}_1 \nabla\chi^{\e''}_1 \nabla \wt{u^{\e''}_1} \nabla \phi \right)dx&=-\int_D A^{\e''}_1 \wt{u^{\e''}_1} \nabla\nabla \phi dx - \int_D \e'' A^{\e''}_1\nabla\chi^{\e''}_1 \wt{u^{\e''}_1} \nabla\nabla\phi dx, 
\end{split}
\end{equation*}
so we get that
\begin{equation*}
\begin{split}
& \int_0^T \int_D A^{\e''}_1 \nabla \wt{u^{\e''}_1}\nabla\left(\phi +\e''\nabla\phi\cdot \chi^{*\e''}_1 \right)\psi(t)dxdt= \\
& \int_0^T \int_D \left(\e''A^{\e''}_1 \nabla \wt{u^{\e''}_1}\nabla\nabla\phi \chi^{*\e''}_1\psi(t)- A^{\e''}_1 \wt{u^{\e''}} \nabla\nabla \phi \psi(t)- \e'' A^{\e''}_1\nabla\chi^{\e''}_1 \wt{u^{\e''}} \nabla\nabla\phi\psi(t)\right) dxdt=\\
& \int_0^T \int_D \left(\e''A^{\e''}_1 \nabla \wt{u^{\e''}_1}\nabla\nabla\phi \chi^{*\e''}_1\psi(t)- A^{\e''}_1\left(I + \e''\nabla\chi^{\e''}_1\right) \wt{u^{\e''}_1} \nabla\nabla \phi \psi(t) \right)dxdt, 
\end{split}
\end{equation*}
and will converge pointwise in $\wt{\Omega}$ (see \cite{Allaire} Lemma 1. 3) to
$$ \int_0^T \int_D - \o{A}_1 \wt{\o{u}_1} \nabla\nabla \phi \psi(t)dxdt = \int_0^T \int_D \o{A}_1 \nabla \wt{\o{u}_1} \nabla \phi \psi(t)dxdt. $$
The sequence given in \eqref{eq6} above converges in $L^1(\wt{\Omega})$ to $0$ and pointwise to 

\begin{equation*}
\begin{split}
&\int_0^T \int_D \left( \wt{\o{u}_1}(t)\phi\psi'(t) - u_{01} \phi\psi' (t) -\o{A}_1 \nabla \wt{\o{u}_1}\nabla\phi\psi(t) 
+ f_1(t)\phi\psi(t) \right) dxdt \\
&+ \int_0^T \int_D \left( \o{\alpha} (\beta_{11}\wt{\o{u}_1}(t)+\beta_{12}\wt{\o{u}_2}(t),\beta_{21}\wt{\o{u}_1}(t)+\beta_{22}\wt{\o{u}_2}(t)) (\wt{\o{u}_2}(t)-\wt{\o{u}_1}(t)) \phi\psi(t)\right)dx dt,
\end{split}
\end{equation*}

and similarly is true for $\wt{\o{u}_2}$ which means that $\wt{\o{u}_1},\wt{\o{u}_2}$ is pointwise the weak solution of the deterministic system \eqref{eqou1} which, according to Theorem \ref{thexunou} has a unique solution, so $\wt{\o{u}_1}, \wt{\o{u}_2}$ and $\o{u}_1, \o{u}_2$ are deterministic. Then, the whole sequences $u^{\e''}_1, u^{\e''}_2$ converge to $\o{u}_1, \o{u}_2$ in distribution, and since the limits are deterministic then the convergence is also in probability see \cite{JP} Theorem 18.3. 

\subsection{Proof of theorem \ref{thexun}}

To prove the existence of solutions, we will follow a similar method used previously in \cite{BEM_19},  through a Galerkin approximation procedure. We consider $(e_k)_{k\geq 1}$ a sequence of linearly independent elements in $H_0^1(D)\cap L^\infty(D)$ such that $span\{e_k\ | \ k\geq 1\}$ is dense in $H_0^1(D)$. We define the $n$-dimensional space $H_0^1(D)_n$ for every $n>0$ as $span\{e_k\ | \ 1\leq k \leq n\}$ and we denote by $\Pi_n$ the projection operator from $L^2(D)$ onto $H_0^1(D)_n$. 

Let us denote by $w_i^\e(t), \ 1\leq i \leq 2$ the following processes
\begin{equation}\label{w}
w_i^\e(t)=e^{-t/\e}v_{0i}^\e+\frac{\sqrt{Q_i}}{\sqrt{\e}} \int_0^t e^{-(t-s)/\e} dW_i(s) \in L^2(\Omega;C([0, T];L^2(D)). 
\end{equation}

Now, in order to prove the existence of solutions, we define the Galerkin approximation 
$$(u^\e_{1n}(t, \omega), z^\e_{1n}(t, \omega), u^\e_{2n}(t, \omega), z^\e_{2n}(t, \omega) )\in H_0^1(D)_n^4$$ 
a. s. $\omega\in\Omega$, solution of the following system

\begin{equation}
\label{weaksolen}
\begin{split}
\int_D\frac{\partial u^\e_{1n}} {\partial t} (t) \phi dx &+ \int_D A_1^\e \nabla u_1^\e (t) \nabla \phi dx = \int_D f_1(t) \phi dx+\\ 
&\int_D \alpha^\e(z^\e_{1n}(t)+w_1^\e(t), z^\e_{2n}(t)+w_2^\e(t)) (u_{2n}^\e(t) -  u_{1n}^\e(t) )\phi dx \\
\int_D\frac{\partial u^\e_{2n}} {\partial t} (t) \phi dx &+ \int_D A_2^\e \nabla u_2^\e (t) \nabla \phi dx =\int_D f_2(t) \phi dx \\
&\int_D \alpha^\e(z^\e_{1n}(t)+w_1^\e(t), z^\e_{2n}(t)+w_2^\e(t)) (u_{1n}^\e(t) -  u_{2n}^\e(t) )\phi dx ,
\end{split}
\end{equation}
for every $\phi \in H_0^1(D)_n$, $u^\e_{1n}(0, \omega) =\Pi_n u^\e_{01}, \ u^\e_{2n}(0, \omega) =\Pi_n u^\e_{02}$, 
\begin{equation}
\label{mildsolen}
\begin{split}
\frac{\partial z^\e_{1n}}{\partial t}(t) = -\frac{1}{\e}(z^\e_{1n}(t)-\beta_{11}u^\e_{1n}(t)-\beta_{12}u^\e_{2n}(t)), \quad z^\e_{1n}(0)=0,\\
\frac{\partial z^\e_{2n}}{\partial t}(t) = -\frac{1}{\e}(z^\e_{2n}(t)-\beta_{21}u^\e_{1n}(t)-\beta_{22}u^\e_{2n}(t)), \quad z^\e_{2n}(0)=0, 
\end{split}
\end{equation}

Then, we pass to the limit on $(u^\e_{1n}, z^\e_{1n} , u^\e_{2n}, z^\e_{2n} )$ when $n \to \infty$. 

We write $$u_{1n}^\e(\omega, t, x) = \sum_{k=1}^n a^\e_{1k}(\omega, t) e_k(x),\   u_{2n}^\e(\omega, t, x) = \sum_{k=1}^n a^\e_{2k}(\omega, t) e_k(x)$$ and 
$$z_{1n}^\e(\omega, t, x) = \sum_{k=1}^n b^\e_{1k}(\omega, t) e_k(x), \ z_{2n}^\e(\omega, t, x) = \sum_{k=1}^n b^\e_{2k}(\omega, t) e_k(x)$$ 
Then, we make the following notations: $$b_{ij}=\displaystyle\int_D e_i(x) e_j(x) dx, \ c^\e_{1ij}=\displaystyle\ \int_D \sum_{p=1}^{ n}\sum_{ q=1}^{ n}a_{1pq}\left(\dfrac{x}{\e}\right)\dfrac{\partial e_i}{\partial x_q} \dfrac{\partial e_j}{\partial x_p} dx,$$ $$c^\e_{2ij}=\displaystyle\ \int_D \sum_{p=1}^{ n}\sum_{ q=1}^{ n}a_{2pq}\left(\dfrac{x}{\e}\right)\dfrac{\partial e_i}{\partial x_q} \dfrac{\partial e_j}{\partial x_p} dx, \ f_{1j}(s) = \int_D f_1(s, x) e_j(x) dx, \ f_{2j}(s) = \int_D f_2(s, x) e_j(x) dx, $$
where $a_{1pq}$ and $a_{2pq}$ are the entries of respectively the matrices $A_1$ and $A_2$ defined previously in Section \ref{sec2}. 
Moreover, we set
$$(F^\e_n)_{ij}(\omega, t, b_{11}, ... b_{1n}, b_{21},... b_{2n})= \int_D  \alpha^\e\left(w^\e_1(t)+\sum_{k=1}^n b_{1k}^\e(t) e_k, w^\e_2(t)+\sum_{k=1}^n b_{2k}^\e(t) e_k\right) e_i e_jdx$$
and get the following system:

\begin{equation}
\label{weaksolen''}
\left\{
\begin{array}{rll}
& \displaystyle\sum_{k=1}^n\frac{\partial a^\e_{1k}}{\partial t} b_{kl}+\sum_{k=1}^n a^\e_{1k} c^\e_{1kl}- \sum_{k=1}^n (a^\e_{2k}-a^\e_{1k}) (F^\e_n)_{kl}(b_{11}^\e, ..., b_{2n}^\e)= f_{1l}(t), \\
\\
& \displaystyle\sum_{k=1}^n\frac{\partial a^\e_{2k}}{\partial t} b_{kl}+\sum_{k=1}^n a^\e_{2k} c^\e_{2kl}- \sum_{k=1}^n (a^\e_{1k}-a^\e_{2k}) (F^\e_n)_{kl}(b_{11}^\e, ..., b_{2n}^\e)= f_{2l}(t), \\
\\
& \displaystyle\frac{\partial b^\e_{1k}}{\partial t}=-\dfrac{1}{\e}\left( b^\e_{1k} - - \beta_{11}a_{1k}^\e-\beta_{12}a_{2k}  \right), \ 1\leq k \leq n\\
\\
& \displaystyle\frac{\partial b^\e_{2k}}{\partial t}=-\dfrac{1}{\e}\left( b^\e_{2k} - - \beta_{21}a_{1k}^\e-\beta_{22}a_{2k}  \right), \ 1\leq k \leq n\\
\\
&a_{1k}^\e(0)=\displaystyle\int_D u^\e_{01} e_k dx, \ 1\leq k \leq n\\
\\
&a_{2k}^\e(0)=\displaystyle\int_D u^\e_{02} e_k dx, \ 1\leq k \leq n\\
\\
&b_{1k}^\e(0)=0, \ 1\leq k \leq n\\
\\
&b_{2k}^\e(0)=0, \ 1\leq k \leq n
\end{array}
\right. 
\end{equation}
for each $1\leq l \leq n$. 
Given the linearly independence of the sequence $(e_{k})_{k\geq 1}$, the form of the functions $(F^\e_n)_{ij}$ and the Lipschitz condition satisfied by $\alpha$, the system has for every $T>0$ an unique $\mathcal{F}_t$ - measurable solution $(a^\e_{1k})_{1\leq k \leq n}, \ (a^\e_{2k})_{1\leq k \leq n}, \ (b^\e_{1k})_{1\leq k \leq n}, \ (b^\e_{2k})_{1\leq k \leq n} \in C([0, T];L^\infty(\Omega))$, with $(a^\e_{1k})_{1\leq k \leq n}, \ (a^\e_{2k})_{1\leq k \leq n}, \ (b^\e_{1k})_{1\leq k \leq n}, \ (b^\e_{2k})_{1\leq k \leq n}  \in W^{1, 2}(0, T)$ a. s. $\omega\in\Omega$. This means that $u_{1n}^\e,\ u_{2n}^\e$, $z_{1n}^\e=v_{1n}^\e-w_1^\e, \ z_{2n}^\e=v_{2n}^\e-w_2^\e$ are a. s. a solution for:
\begin{equation}
\label{weaksolen'''}
\left\{
\begin{array}{rll}
&\displaystyle\int_D\frac{\partial u^\e_{1n}} {\partial t} (t) \phi dx + \int_D A_1^\e \nabla u_{1n}^\e (t) \nabla \phi dx -\\
&\int_D \alpha^\e(z^\e_{1n}(t)+w_1^\e(t), z^\e_{2n}(t)+w_2^\e(t)) (u_{2n}^\e(t) - u_{1n}^\e(t)) \phi dx
= \displaystyle\int_D f_1(t) \phi dx, \\
\\
&\displaystyle\int_D\frac{\partial u^\e_{2n}} {\partial t} (t) \phi dx + \int_D A_2^\e \nabla u_{2n}^\e (t) \nabla \phi dx - \\
&\int_D \alpha^\e(z^\e_{1n}(t)+w_1^\e(t), z^\e_{2n}(t)+w_2^\e(t)) (u_{1n}^\e(t) - u_{2n}^\e(t)) \phi dx = \displaystyle\int_D f_2(t) \phi dx, \\
&d z_{1n}^\e =-\dfrac{1}{\e}\left( z^\e_{1n} -  - \beta_{11}u_{1n}^\e-\beta_{12}u_{2n}^\e  \right), \\
\\
&d z_{2n}^\e =-\dfrac{1}{\e}\left( z^\e_{2n} - \beta_{21}u_{1n}^\e-\beta_{22}u_{2n}^\e \right), \\
\\
&u_{1n}^\e(0)=\Pi_n u^\e_{01}, \\
\\
&u_{2n}^\e(0)=\Pi_n u^\e_{02}, \\
\\
&z_{1n}^\e(0)=0, \\
\\
&z_{2n}^\e(0)=0, 
\end{array}
\right. 
\end{equation}
for every $\phi\in H_0^1(D)_n$. We take $\phi= u_{1n}^\e$ in the first equation of \eqref{weaksolen'''}, and  $\phi= u_{2n}^\e$ in the second equation to derive that a. s. $\omega\in\Omega$ :

\begin{equation}\nonumber
\begin{split}
\frac{\partial}{\partial t} \|u^\e_{1n} \|^2_{L^2(D)} &\leq \|f_1(t)\|^2_{L^2(D)} + C \|u^\e_{1n} \|^2_{L^2(D)} + C \|u^\e_{2n} \|^2_{L^2(D)} , \\
\frac{\partial}{\partial t} \|u^\e_{2n} \|^2_{L^2(D)} &\leq \|f_2(t)\|^2_{L^2(D)} + C \|u^\e_{1n} \|^2_{L^2(D)} + C \|u^\e_{2n} \|^2_{L^2(D)} , \Rightarrow\\
 \|u^\e_{1n} \|^2_{L^2(D)} +  \|u^\e_{2n} \|^2_{L^2(D)} &\leq e^{Ct} \left( \|f_1\|_{L^2(0, T;L^2(D))} + \|f_2\|_{L^2(0, T;L^2(D))} +  \|u^\e_{01}\|_{L^2(D)} + \|u^\e_{02}\|_{L^2(D)} \right), 
\end{split}
\end{equation}
so
\begin{equation}
\label{estune1}
\sup_{n>0}\| u_{1n}^\e\|_{L^\infty(0, T;L^2(D))} ,\ \sup_{n>0}\| u_{2n}^\e\|_{L^\infty(0, T;L^2(D))}  \leq C_T(1+\|u^\e_{01}\|_{L^2(D)} + \|u^\e_{02}\|_{L^2(D)} ). 
\end{equation}
We also obtain based on the positivity of $A_1$ that
\begin{equation}\nonumber
\begin{split}
\int_0^T m\|\nabla u^\e_{1n} \|^2_{L^2(D)^3} ds +\frac{1}{2} \| u_{1n}^\e(T)\|^2_{L^2(D)}&\leq \int_0^T \int_D f_1(t) u_{1n}^\e dxdt + \\
\frac{1}{2}\|u^\e_{01} \|^2_{L^2(D)}&+\int_0^T C(\| u^\e_{1n} \|^2_{L^2(D)} +\| u^\e_{2n} \|^2_{L^2(D)})ds
\end{split}
\end{equation}
which infers that

\begin{equation*}
%\begin{split}
\int_0^T m \|\nabla u^\e_{1n} \|^2_{L^2(D)^3} ds\leq T \|f_1\|_{L^2(0, T;L^2(D))}\| u_{1n}^\e\|_{L^\infty(0, T;L^2(D))}+ C_T(1+\|u^\e_{01}\|_{L^2(D)} +\|u^\e_{02}\|_{L^2(D)})).
%\end{split}
\end{equation*}
Hence, 

\begin{equation}
\label{estune2}
\sup_{n>0}\| u_{1n}^\e\|_{L^2(0, T;H_0^1(D))} , \ \sup_{n>0}\| u_{2n}^\e\|_{L^2(0, T;H_0^1(D))}  \leq C_T(1+\|u^\e_{01}\|_{L^2(D)} + \|u^\e_{02}\|_{L^2(D)} ). 
\end{equation}
The estimates \eqref{estune1} and \eqref{estune2} imply using the first equation of the system \eqref{weaksolen'''} that
\begin{equation}
\label{estune3}
\sup_{n>0}\left\|\dfrac{\partial u_{1n}^\e}{\partial t}\right\|_{L^2(0, T;(H_0^1(D)_n)')}, \ \sup_{n>0}\left\|\dfrac{\partial u_{2n}^\e}{\partial t}\right\|_{L^2(0, T;(H_0^1(D)_n)')}  \leq C_T(1+\|u^\e_{01}\|_{L^2(D)} + \|u^\e_{02}\|_{L^2(D)} ). 
\end{equation}

This means that the sequences $u_{1n}^\e, \ u_{2n}^\e$ are bounded in $L^2(0, T;H_0^1(D))\cap W^{1, 2}(0, T;H^{-1}(D))$ which is compactly embedded in $L^2(0, T;L^2(D))$ (Theorem 2. 1, page 271 from \cite{temam}) and in $C([0, T], H^{-1}(D))$. Hence, there exists 
subsequences $u_{1n'}^\e, \ u_{2n'}^\e$ that converge $P$-a.s. in $L^2(0, T;L^2(D))\cap C([0, T], H^{-1}(D))$ to some $u_1^\e, \ u_2^\e$ which are also weak limits in $L^2(0, T;H_0^1(D)) \cap W^{1, 2}(0, T;H^{-1}(D))$ and weak$^*$ limits in $L^\infty(0, T;L^2(D))$. So using Lemma 1.2, page 260 from \cite{temam} a. s. $\omega\in\Omega$, 
$u_1^\e, \ u_2^\e \in L^2(0, T;H_0^1(D))\cap C ([0, T];L^2(D))\cap W^{1, 2}(0, T;H^{-1}(D))$. 

We also have from \eqref{weaksolen'''} that
$$z^\e_{1n'}(t)=\displaystyle\frac{1}{\e}\int_0^t e^{-(t-s)/\e} u^\e_{1n'}(s) ds$$
will converge $P$-a.s. to $z_1^\e(t)=\displaystyle\frac{1}{\e}\int_0^t e^{-(t-s)/\e} u_1^\e(s) ds$ in $C([0, T];L^2(D))$ and $$z^\e_{2n'}(t)=\displaystyle\frac{1}{\e}\int_0^t e^{-(t-s)/\e} u^\e_{2n'}(s) ds$$
 to $z_2^\e(t)=\displaystyle\frac{1}{\e}\int_0^t e^{-(t-s)/\e} u_2^\e(s) ds$ in $C([0, T];L^2(D))$. 
 
 We remark that the sequences $u^{\e}_{1n'},\ u^{\e}_{2n'}$ are $\mathcal{F}_t$ - measurable in $H^{-1}(D)$.
 
We now pass to the limit when $n'\to\infty$ in the first equation of the system \eqref{weaksolen'''}
pointwise in $\omega\in\Omega$ using the convergences of the sequences $u^\e_{n'}$ and $\dfrac{\partial u^\e_{n'}}{\partial t}$:
$$\lim_{n'\to\infty} \int_0^t \int_D\frac{\partial u^\e_{1n'}} {\partial t} \phi dx ds= \int_0^t\int_D\frac{\partial u^\e_1} {\partial t} \phi dxds$$
and
$$\lim_{n'\to\infty} \int_0^t\int_D A_1^\e \nabla u^\e_{1n'} \nabla \phi dx ds= \int_0^t\int_D A_1^\e \nabla u^\e_1 \nabla \phi dx ds. $$

Also
\begin{equation}\nonumber
\begin{split}
\left|\int_0^t \int_D  \alpha^\e(z^\e_{1n'}+w^\e_{1}, z^\e_{2n'}+w^\e_{2}) (u^\e_{2n'}-u^\e_{1n'}) \phi dxds- 
\int_0^t\int_D  \alpha^\e(z^\e_1+w^\e_1, z^\e_2+w^\e_2) (u^\e_2-u^\e_1 ) \phi dxds\right| &\leq \\
\left| \int_0^t\int_D \alpha^\e(z^\e_1+w^\e_1, z^\e_2+w^\e_2) (u^\e_2-u^\e_1-u^\e_{2n'}+u^\e_{1n'}) \phi dxds \right|
&+\\
 \left|\int_0^t\int_D \left(\alpha^\e(z^\e_{1n'}+w^\e_{1}, z^\e_{2n'}+w^\e_{1}) -\alpha^\e(z^\e_1+w^\e_1, z^\e_2+w^\e_1)\right)  (u^\e_{2n'}-u^\e_{1n'}) \phi dxds\right| &\leq \\
C \left(\int_0^t\int_D ( u^\e_{n'}- u^\e)^2 dxds\right)^{1/2}+ C \int_0^t\int_D \left| z^\e_{n'} - z^\e\right| | u^\e| |\phi | dxds& \leq \\
C \| u^\e_{1n'}- u^\e_1\|_{L^2(0, T;L^2(D))} + C \| u^\e_{2n'}- u^\e_2\|_{L^2(0, T;L^2(D))} &+\\
C \int_0^T \left(\| z^\e_{2n'} - z^\e_2 \|_{L^2(D)} + \| z^\e_{1n'} - z^\e_1 \|_{L^2(D)} \| \right)\left(\|u_1^\e \|_{L^2(D)} + \|u_2^\e \|_{L^2(D)} \right)\| \phi \|_{L^\infty(D)}ds, 
\end{split}
\end{equation}
so we obtain that a. s. 
$$\lim_{n'\to\infty} \int_0^t \int_D  \alpha^\e(z^\e_{1n'}+w^\e_{1}, z^\e_{2n'}+w^\e_{2}) (u^\e_{2n'}-u^\e_{1n'}) \phi dxds = 
\int_0^t\int_D \alpha^\e(z^\e_{1}+w^\e_{1}, z^\e_{2}+w^\e_{2}) (u^\e_{2}-u^\e_{1}) \phi dxds. $$ 

Using these convergences, we obtain in the limit:

\begin{equation}
\label{weaksolen''''}
\left\{
\begin{array}{rll}
&\displaystyle\int_0^t \int_D\left(\frac{\partial u^\e_1} {\partial t} + A_1^\e \nabla u_1^\e\nabla -\alpha^\e(z^\e_1+ w^\e_1, z^\e_2+ w^\e_2)( u_2^\e - u_1^\e )\right)\phi dxds = \displaystyle \int_0^t \displaystyle\int_D f_1 \phi dxds, \\
\\
&\displaystyle\int_0^t \int_D\left(\frac{\partial u^\e_2} {\partial t} +  A_2^\e \nabla u_1^\e\nabla -  \alpha^\e( z^\e_1+ w^\e_1, z^\e_2+ w^\e_2)( u_1^\e - u_2^\e )\right)\phi dxds = \displaystyle \int_0^t \displaystyle\int_D f_2 \phi dxds, \\
\\
&d z_1^\e =-\dfrac{1}{\e}\left( z_1^\e - \beta_{11}u_1^\e -\beta_{12}u_2^\e  \right), \\
\\
&d z_2^\e =-\dfrac{1}{\e}\left( z_2^\e - \beta_{21}u_1^\e -\beta_{22}u_2^\e  \right), \\
\\
& u^\e_1(0)=u^\e_{01}, \\
\\
& u^\e_2(0)=u^\e_{02}, \\
\\
& z^\e_1(0)=0, \\
\\
& z^\e_2(0)=0, 
\end{array}
\right. 
\end{equation}
pointwise in $\omega \in\Omega$ for every $\phi\in H_0^1(D)_n$, so by density it is true for any $\phi \in H_0^1(D)$. Now, let us denote
\begin{equation}
\label{vne}
\begin{split}
v^\e_{1n}(t)=z^\e_{1n} (t)- w_1^\e(t), \ v^\e_{2n}(t)=z^\e_{2n} (t)- w_2^\e(t), \\
v^\e_{1}(t)=z^\e_{1} (t)- w_1^\e(t), \ v^\e_{2}(t)=z^\e_{2} (t)- w_2^\e(t),
\end{split}
\end{equation}
then we deduce that $(u^\e, v^\e)$ is a solution for our initial system in the sense given by \eqref{weaksole1}, \eqref{weaksole2} and \eqref{mildsole}. The solution $( u^\e, v^\e)$ is $\mathcal{F}_t$ - measurable as the limit of the Galerkin approximation $(u^\e_{n'}, v^\e_{n'})$ which is $\mathcal{F}_t$ - measurable by construction. 
Furthermore, given the uniform estimates for $u^\e_0$ it is easy to obtain from \eqref{estune1}--\eqref{estune3} the estimates \eqref{est1}--\eqref{est3} and \eqref{est4} follows from the uniform bounds for $v^\e_0$. 

Now, we prove the uniqueness. Let us assume that we have two solutions $\{u^\e_{11}, u^\e_{12}, v^\e_{11}, v^\e_{12}\}$ and $\{u^\e_{21}, u^\e_{22}, v^\e_{21}, v^\e_{22}\}$ for the system. Then, 
\begin{equation*}
\begin{split}
&\int_D (u^\e_{21}(t) -u^\e_{11}(t)) \phi dx + \int_0^t \int_D A_1^\e(\nabla u^\e_{21} -\nabla u^\e_{11}) \nabla \phi dx ds =\\
&\int_0^t \int_D (\alpha^\e(v^\e_{21}, v^\e_{22}) (u^\e_{22}-u^\e_{21}) -\alpha^\e(v^\e_{11}, v^\e_{12}) (u^\e_{12}-u^\e_{11}))\phi dx ds, 
\end{split}
\end{equation*}
and
\begin{equation}
\nonumber
v^\e_{21}(t)-v^\e_{11}(t) = \frac{1}{\e} \int_0^t \left(\beta_{11}(u^\e_{21}(s)-u^\e_{11}(s)) + \beta_{12}(u^\e_{22}(s)-u^\e_{12}(s))\right)  e^{-(t-s)/\e} ds. 
\end{equation}
we take $\phi = u^\e_{21} - u^\e_{11}$ and we get:
\begin{equation}
\nonumber
\begin{split}
&\int_D (u^\e_{21}(t) -u^\e_{11}(t))^2 dx + \int_0^t \int_D A_1^\e(\nabla u^\e_{21} -\nabla u^\e_{11})^2 dx ds =\\
&\int_0^t \int_D \alpha^\e(v^\e_{21}, v^\e_{22}) (u^\e_{22} - u^\e_{21}-u^\e_{12} +u^\e_{11}) (u^\e_{21}-u^\e_{11})dx ds +\\
&\int_0^t \int_D (\alpha^\e(v^\e_{21}, v^\e_{22})-\alpha^\e(v^\e_{11}, v^\e_{12}) )(u^\e_{12}-u^\e_{11} )(u^\e_{21} - u^\e_{11})dx ds\leq \\
&c\int_0^t \|u^\e_{21} - u^\e_{11}\|_{L^2(D)}^2ds +c\int_0^t \|u^\e_{22} - u^\e_{12}\|_{L^2(D)}^2ds+ \\
&c\int_0^t \int_D (|v^\e_{21}-v^\e_{11}|+|v^\e_{22}-v^\e_{12}|) |u^\e_{12}-u^\e_{11} | u^\e_{21}-u^\e_{11}| dx ds \leq \\
&c\int_0^t \|u^\e_{21} - u^\e_{11}\|_{L^2(D)}^2ds +c\int_0^t \|u^\e_{22} - u^\e_{12}\|_{L^2(D)}^2ds+\\
& c \int_0^t (\|v^\e_{21}-v^\e_{11}\|_{L^2(D)} + \|v^\e_{22}-v^\e_{12}\|_{L^2(D)})\|u^\e_{12}-u^\e_{11} \|_{L^4(D)}\|u^\e_{21} - u^\e_{11} \|_{L^4(D)}ds \leq \\
&c\int_0^t \|u^\e_{21} - u^\e_{11}\|_{L^2(D)}^2ds +c\int_0^t \|u^\e_{22} - u^\e_{12}\|_{L^2(D)}^2ds + \\
&c \left(\int_0^t  (\|v^\e_{21}-v^\e_{11}\|^2_{L^2(D)} + \|v^\e_{22}-v^\e_{12}\|^2_{L^2(D)}) \|u^\e_{12}-u^\e_{11} \|^2_{L^4(D)}ds \right)^{1/2} \left(\int_0^t \| u^\e_{21}-u^\e_{11} \|^2_{L^4(D)}ds\right)^{1/2}\leq \\
&c\int_0^t \|u^\e_{21} - u^\e_{11}\|_{L^2(D)}^2ds +c\int_0^t \|u^\e_{22} - u^\e_{12}\|_{L^2(D)}^2ds +\\
&c \int_0^t(\|v^\e_{21}-v^\e_{11}\|^2_{L^2(D)} + \|v^\e_{22}-v^\e_{12}\|^2_{L^2(D)}) \|\nabla u^\e_{12}-\nabla u^\e_{11} \|^2_{L^2(D)^3}ds +\dfrac{m}{2} \int_0^t \|\nabla u^\e_{21} - \nabla u^\e_{11} \|^2_{L^2(D)^3}ds\leq\\
&c\int_0^t \|u^\e_{21} - u^\e_{11}\|_{L^2(D)}^2ds +c\int_0^t \|u^\e_{22} - u^\e_{12}\|_{L^2(D)}^2ds+ \\
&c \int_0^t (\|v^\e_{21}-v^\e_{11}\|^2_{L^2(D)} + \|v^\e_{22}-v^\e_{12}\|^2_{L^2(D)}) \|\nabla u^\e_{12}-\nabla u^\e_{11}  \|^2_{L^2(D)^3}ds +\dfrac{m}{2} \int_0^t \|\nabla u^\e_{21} - \nabla u^\e_{11} \|^2_{L^2(D)^3}ds, 
\end{split}
\end{equation}
where we used H\"{o}lder's inequality, the imbedding of $H_0^1(D)$ into $L^4(D)$ and the Lipschitz condition of $\alpha$.  So:
\begin{equation}\nonumber
\begin{split}
&\int_D (u^\e_{21}(t) -u^\e_{11}(t))^2 dx  \leq c\int_0^t \|u^\e_{21} - u^\e_{11}\|_{L^2(D)}^2ds +c\int_0^t \|u^\e_{22} - u^\e_{12}\|_{L^2(D)}^2ds+ \\
&c \int_0^t (\|v^\e_{21}-v^\e_{11}\|^2_{L^2(D)} + \|v^\e_{22}-v^\e_{12}\|^2_{L^2(D)}) \|\nabla u^\e_{12}-\nabla u^\e_{11}  \|^2_{L^2(D)^3}ds,
\end{split}
\end{equation}
and similarly
\begin{equation}\nonumber
\begin{split}
&\int_D (u^\e_{22}(t) -u^\e_{12}(t))^2 dx  \leq c\int_0^t \|u^\e_{21} - u^\e_{11}\|_{L^2(D)}^2ds +c\int_0^t \|u^\e_{22} - u^\e_{12}\|_{L^2(D)}^2ds+ \\
&c \int_0^t (\|v^\e_{21}-v^\e_{11}\|^2_{L^2(D)} + \|v^\e_{22}-v^\e_{12}\|^2_{L^2(D)}) \|\nabla u^\e_{22}-\nabla u^\e_{21}  \|^2_{L^2(D)^3}ds.
\end{split}
\end{equation}
We add these two equations and use
\begin{equation}\nonumber
\begin{split}
\|v^\e_{21}(t)-v^\e_{11}(t) \|^2_{L^2(D)} \leq c \int_0^t \left(\| u^\e_{21}(s)-u^\e_{11}(s)\|^2
_{L^2(D)}+ (\| u^\e_{22}(s)-u^\e_{12}(s)\|^2
_{L^2(D)}\right)e^{-2(t-s)/\e}ds\\
 \leq cT \sup_{s\in [0, t]}  \left(\| u^\e_{21}(s)-u^\e_{11}(s)\|^2
_{L^2(D)}+ (\| u^\e_{22}(s)-u^\e_{12}(s)\|^2
_{L^2(D)}\right), 
\end{split}
\end{equation}
and
\begin{equation}\nonumber
\begin{split}
\|v^\e_{22}(t)-v^\e_{12}(t) \|^2_{L^2(D)}
 \leq cT \sup_{s\in [0, t]} \left(\| u^\e_{21}(s)-u^\e_{11}(s)\|^2
_{L^2(D)}+ (\| u^\e_{22}(s)-u^\e_{12}(s)\|^2
_{L^2(D)}\right).
\end{split}
 \end{equation}
to obtain:
\begin{equation}
\nonumber
\begin{split}
&\sup_{s\in[0, t]}\left( \|u^\e_{21}(s) - u^\e_{11}(s)\|_{L^2(D)}^2 +\|u^\e_{22}(s) - u^\e_{12}(s)\|_{L^2(D)}^2 \right)\leq \\
&cT \int_0^t \sup_{r\in [0, s]} \left( \| u^\e_{21}(r)-u^\e_{11}(r)\|^2_{L^2(D)}+\| u^\e_{22}(r)-u^\e_{12}(r)\|^2_{L^2(D)}\right) \left( \sum_{i,j=1}^2\| \nabla u^\e_{ij}(s) \|^2_{L^2(D)^3}+1\right)ds. 
\end{split}
\end{equation}
We use Gr\"{o}nwall's lemma for the function $\sup_{s\in[0, t]}\left(\|u^\e_{21}(s) - u^\e_{11}(s)\|_{L^2(D)}^2 +\|u^\e_{22}(s) - u^\e_{12}(s)\|_{L^2(D)}^2\right)$ to obtain that:
\begin{equation}
\nonumber
\begin{split}
\sup_{s\in[0, t]}\left( \|u^\e_{21}(s) - u^\e_{11}(s)\|_{L^2(D)}^2 +\|u^\e_{22}(s) - u^\e_{12}(s)\|_{L^2(D)}^2 \right)\leq \\
\left( \|u^\e_{21}(0) - u^\e_{11}(0)\|_{L^2(D)}^2 +\|u^\e_{22}(0) - u^\e_{12}(0)\|_{L^2(D)}^2\right) e^{cT \displaystyle \int_0^t \left(1+ \sum_{i,j=1}^2\| \nabla u^\e_{ij}(s) \|^2_{L^2(D)^3}\right)ds}, 
\end{split}
\end{equation}
which gives the uniqueness and this completes the proof.

\subsection{Proof of theorem \ref{threg}}

To show the estimates of the theorem, we go back to the Galerkin approximation used to show the existence. In the system \eqref{weaksolen'''} we take $\phi=\dfrac{\partial u_{1n}^\e}{\partial t} (t)$ and get
\begin{equation}\nonumber
\displaystyle\int_D\left| \frac{\partial u^\e_{1n}} {\partial t} (t)\right|^2 dx + \int_D A_1^\e \nabla u_{1n}^\e (t) \nabla\frac{\partial u^\e_{1n}} {\partial t} (t) dx \leq C \left\| \frac{\partial u^\e_{1n}} {\partial t} (t)\right\|_{L^2(D)}\left(\|f_1(t)\|_{L^2(D)} + \|u_{1n}^\e (t)\|_{L^2(D)} +  \|u_{2n}^\e (t)\|_{L^2(D)} \right). 
\end{equation}
We integrate on $[0, t]$ and use the estimates already obtained for $u^\e_{1n}$ to get:
\begin{equation}\nonumber
\int_0^t \left\| \frac{\partial u^\e_{1n}} {\partial t} (s)\right\|^2_{L^2(D)}ds + m \left\| \nabla u^\e_{1n}(t)\right\|^2_{L^2(D)} \leq M\left\| \nabla u^\e_{1n}(0)\right\|^2_{L^2(D)} + C\int_0^t \left\| \frac{\partial u^\e_{1n}} {\partial t} (s)\right\|_{L^2(D)}, 
\end{equation}
and from here
$$\sup_{\e>0} \sup_{n>0} \int_0^t\left\| \frac{\partial u^\e_{1n}} {\partial t} (s)\right\|^2_{L^2(D)}ds \leq C_T, $$
and
$$\sup_{\e>0} \sup_{n>0} \sup_{t\in[0, T]}\left\| \nabla u^\e_{1n}(t)\right\|^2_{L^2(D)} \leq C_T, $$
which will give us by passing to the limit on the subsequence $u^\e_{1n'}$ 
\begin{equation}
\nonumber
\sup_{\e > 0} \left\|\dfrac{ \partial u_1^\e}{\partial t} \right\|_{L^\infty (\Omega;L^2(0, T; L^2(D))} \leq C_T, 
\end{equation}
and 
\begin{equation}
\nonumber
\sup_{\e > 0} \| u_1^\e \|_{L^\infty (\Omega;L^\infty(0, T;H_0^1(D))))} \leq C_T, 
\end{equation}
From the previous theorem, we know that $u_1^\e\in L^\infty (\Omega; C(0, T;L^2(D)))$ then using Lemma 1.4, Chap III from \cite{temam}
we deduce that $u_1^\e\in L^\infty (\Omega; C(0, T;H_0^1(D)))$. 

Similar arguments are used for $u_2^\e$ which  completes the proof of Theorem \ref{threg}.

%We use now the first equation from \eqref{system1} and the regularity theorem for the stationary Stokes equation from \cite{temam} to %obtain $u_1^\e\in L^\infty (\Omega;L^2(0, T;H^2(D)))$. We get \eqref{est2'} by using Lemma 1. 2, section 1. 4 from \cite{temam}. 

\subsection{Proof of theorem \ref{thexunou}}

The proof of existence of solutions for the averaged system is similar to the proof of system \eqref{system1}, using a Galerkin approximation procedure. The finite dimensional approximation $\o{u}_{1n},\ \o{u}_{2n}$, defined as in Theorem \ref{thexun} will solve
\begin{equation}
\label{weaksoloun}
\int_D\frac{\partial \o{u}_{1n}} {\partial t} (t) \phi dx + \int_D \o{A}_1\nabla \o{u}_{1n} (t) \nabla \phi dx = \int_D \o{\alpha}(\beta_{11}\o{u}_{1n} + \beta_{12}\o{u}_{2n}, \beta_{21}\o{u}_{1n} + \beta_{22}\o{u}_{2n}) (\o{u}_{2n} - \o{u}_{1n})\phi dx + \int_D f_1(t) \phi dx, 
\end{equation}
for every $\phi \in C([0, T], H_0^1(D))_n)$, and $\o{u}_{1n}(0) =\Pi_n u_{01}$. We take $\phi = \o{u}_{1n}(t)$:
\begin{equation*}
\begin{split}
&\int_D\frac{\partial \o{u}_{1n}} {\partial t} (t) \o{u}_{1n}(t) dx + \int_D m\|\nabla \o{u}_{1n} (t)\|^2 dx \leq c\int_D \left( |\o{u}_{1n}(t) |^2 +  |\o{u}_{2n}(t) |^2 \right) dx + \int_D f_1(t) \o{u}_{1n}(t)dx \Rightarrow \\
&\dfrac{ \partial} {\partial t} \|\o{u}_{1n}(t) \|^2_{L^2(D)} \leq \|f_1(t) \|^2_{L^2(D)} +c \|\o{u}_{1n}(t) \|^2_{L^2(D)}  +c \|\o{u}_{2n}(t) \|^2_{L^2(D)}  \Rightarrow \\
&\|\o{u}_{1n}(t) \|^2_{L^2(D)} \leq c +c\int_0^t \left(\|\o{u}_{1n}(s) \|^2_{L^2(D)} + \|\o{u}_{2n}(s) \|^2_{L^2(D)} \right)ds. 
\end{split}
\end{equation*}
We use Gr\"{o}nwall's lemma, and get that for $1\leq i \leq 2$ :
\begin{equation}\label{estunou1}
\sup _{n > 0}\| \o{u}_{in} \|_{C([0, T]; L^2(D)} \leq C_T, 
\end{equation}
and from here we also obtain
\begin{equation}\label{estunou2}
\sup _{n > 0}\|\nabla \o{u}_{in} \|_{L^2(0, T;L^2(D)^{3})} \leq C_T, 
\end{equation}
and
\begin{equation}\label{estunou3}
\sup _{n > 0}\left\| \dfrac{ \partial \o{u}_{in}} {\partial t} \right\|_{L^2(0, T;H^{-1}(D)} \leq C_T. 
\end{equation}
So there exists a subsequence $\o{u}_{in'}$ and function $\o{u}_i \in L^\infty(0, T; L^2(D)) \cap L^2(0, T;H_0^1(D))$ for $1\leq i \leq 2$ such that $\o{u}_{in'}$ converges weakly star in $L^\infty(0, T; L^2(D))$ and weakly to $L^2(0, T;H_0^1(D))$ to $\o{u}_i$ and also $\dfrac{\partial \o{u}_{in'}}{\partial t} $ converges to $\dfrac{\partial \o{u}_i}{\partial t} $ weakly in $L^2(0, T;H^{-1}(D))$. We apply again now Theorem 2. 1, page 271 and Lemma 1. 2 page 260 from \cite{temam} to obtain that $\o{u}_{in'}$ converges strongly in $L^2(0, T;L^2(D))$ and in $C([0, T];L^2(D))$ to $\o{u}_i$ for $1\leq i \leq 2$. We then pass to the limit and obtain that $\o{u}_1, \ \o{u}_2$ is a weak solution for \eqref{eqou1}. 

Uniqueness is proved similarly as in Theorem \ref{thexun}.
Let us now assume that the initial condition $u_{01}, \ u_{02} \in H_0^1(D)$. We use the equation \eqref{weaksoloun} with $\phi = \dfrac{\partial \o{u}_{1n}}{\partial t}$:
\begin{equation}
\label{weaksoloun'}
\begin{split}
&\int_D\left(\frac{\partial \o{u}_{1n}} {\partial t}\right)^2 dx + \int_D \o{A}_1\nabla \o{u}_{1n} \nabla \dfrac{\partial \o{u}_{1n}}{\partial t} dx =\\
& \int_D \o{\alpha}(\beta_{11}\o{u}_{1n}+\beta_{12}\o{u}_{2n}, \beta_{21} \o{u}_{1n}+\beta_{22} \o{u}_{2n})( \o{u}_{2n}-\o{u}_{1n}) \dfrac{\partial \o{u}_{1n}}{\partial t}dx+ \int_D f_1 \dfrac{\partial \o{u}_{1n}}{\partial t}dx,
\end{split} 
\end{equation}
we integrate it over $[0, T]$, and use H\"{o}lder's inequality:
$$ \left\| \frac{\partial \o{u}_{1n}} {\partial t}\right\|^2_{L^2(0, T; L^2(D))} + m\| \nabla \o{u}_n (T)\|^2_{L^2(D)^{3}} - M\| \nabla \o{u}_n (0)\|^2_{L^2(D)^{3}} \leq C \left\| \frac{\partial \o{u}_{1n}} {\partial t}\right\|_{L^2(0, T; L^2(D))}, $$
which will imply that $\dfrac{\partial \o{u}_{1n}} {\partial t} \in L^2(0, T; L^2(D))$ uniformly bounded and $\nabla u_{1n} \in L^\infty(0, T;L^2(D)^3)$ uniformly bounded. We deduce by passing to the limit that $ \dfrac{\partial \o{u}_1} {\partial t}\in L^2(0, T; L^2(D)) $ and $\o{u}_1 \in C([0, T]; H_0^1(D))$ and the same is true also for $\o{u}_2$.

\section*{Acknowledgements}
Hakima Bessaih was partially supported by Simons Foundation Grant: 582264.

%The model studied in this paper has been suggested to us by Yalchin Efendiev. We are very grateful for various and insightful %conversations with him.   

\end{document}